\documentclass{article}
 
\usepackage[utf8]{inputenc}
\usepackage{amssymb,amsthm,amsmath,amstext,amscd}
\usepackage{xcolor,tikz-cd,indentfirst,hyperref}
\usepackage{verbatim}
\usepackage{wasysym}
 
\allowdisplaybreaks
\numberwithin{equation}{section}\theoremstyle{plain}
\newtheorem{theorem}{Theorem}
\newtheorem{definition}[theorem]{Definition}
\newtheorem{corollary}[theorem]{Corollary}
\newtheorem{proposition}[theorem]{Proposition}
\newtheorem{lemma}[theorem]{Lemma}
\newtheorem{example}[theorem]{Example}
\newtheorem{remark}[theorem]{Remark}
\newtheorem{question}[theorem]{Question}

\DeclareMathOperator{\Aut}{Aut}
\DeclareMathOperator{\LND}{LND}
\DeclareMathOperator{\LFD}{LFD}
\DeclareMathOperator{\ad}{ad}
\DeclareMathOperator{\Der}{Der}
\DeclareMathOperator{\centro}{\mathrm{Z}}

\DeclareMathOperator{\ml}{\rm ML}

\newcommand{\acb}[1]{\textup{\color{red}({#1})}}

\newcommand{\mov}[1]{\textup{\color{blue}({#1})}}

\begin{document}

\title{On Locally Finite Derivations in Ore Extensions}
\author{R. Baltazar, A. Bianchi, M. Veloso and J. Schwarz}
\date{}

\maketitle

\begin{abstract} 
Let $\Bbbk$ be an algebraically closed field of characteristic zero. We classify the locally finite derivations of arbitrary Ore extensions of $\Bbbk[x]$, thus extending van den Essen's \cite{V92} classification for the polynomial algebra $\Bbbk[x,y]$ to this noncommutative setting. More precisely, we consider the three families arising in the classification of Ore extensions
of $\Bbbk[x]$: the quantum plane, the first quantum Weyl algebra, and the differential Ore extensions
\[
A_h=\Bbbk[x][t;h(x)\partial_x].
\]

For both the quantum plane and the first quantum Weyl algebra, we determine the locally finite derivations and explain how the resulting classifications are related to the work of Suárez-Alvarez and Vivas \cite{SuarezVivas} on generalized Weyl algebras. For the algebras $A_h$, with $h$ nonconstant, we obtain a complete classification in both the square-free and non-square-free cases. As a consequence, we show that $\LFD(A_h)$ is a solvable and weakly locally finite Lie subalgebra of $\Der(A_h)$, although it is not locally finite as a set of derivations.
\end{abstract}

\section{Introduction}

Throughout this paper, $\Bbbk$ denotes an algebraically closed field of characteristic zero. Let $A$ be a $\Bbbk$-algebra and let $D\in\Der(A)$. The derivation
$D$ is said to be \emph{locally finite} if, for every $a\in A$, the
$\Bbbk$-vector space
\[
\operatorname{Span}_{\Bbbk}
\{D^n(a)\mid n\geq0\}
\]
is finite-dimensional. Equivalently, for each $a\in A$, there exists a nonzero polynomial $p_a(z)\in\Bbbk[z]$ such that
$p_a(D)(a)=0$. The derivation $D$ is said to be \emph{locally nilpotent} if, for every $a\in A$, there exists $n\geq1$ such that $D^n(a)=0$. Thus, every locally nilpotent derivation is locally finite. We denote by
\[
\LFD(A)
\quad\text{and}\quad
\LND(A)
\]
the sets of locally finite and locally nilpotent derivations of $A$, respectively.

A classical result of Rentschler \cite{Re1968} states that every locally nilpotent derivation of $\Bbbk[x,y]$ is, after a suitable change of coordinates, of the form
\[
f(x) \, \partial_y,
\quad f(x)\in\Bbbk[x].
\]

Locally finite derivations arise naturally in the study of polynomial flows, algebraic group actions, and automorphism groups. In the commutative polynomial algebra $\Bbbk[x,y]$, they admit, up to conjugation by polynomial automorphisms, a classification \cite[Corollary 4.7]{V92}. Van den Essen used this classification to obtain an algorithm for deciding whether a polynomial vector field in dimension two has a polynomial flow; see \cite[Section 5.]{V92}.

A natural motivation for the present work comes from the
classification of Ore extensions of the polynomial algebra
$\Bbbk[x]$. More precisely, every such Ore extension is, up to isomorphism, either a quantum plane, a first quantum Weyl algebra, or a differential Ore extension
\[
A_h
=
\Bbbk[x][t;h(x)\partial_x],
\]
where $h(x)\in\Bbbk[x]$ (see \cite[Lemma 2.2]{SBVA2025} for further details). The latter algebra is generated by $x$ and
$t$, subject to the relation $tx-xt=h(x)$. Therefore, the aim of this paper is to characterize the locally finite derivations of Ore extensions of $\Bbbk[x]$ by considering separately the three families arising in this classification.

In Section \ref{QuantumPlane}, we study the quantum plane
\[
\Bbbk_q[x,y]
=
\Bbbk\langle x,y\mid yx=qxy\rangle,
\quad
q\in\Bbbk^\ast.
\]
When $q=1$, this algebra is the commutative polynomial algebra
$\Bbbk[x,y]$, whose locally finite derivations are described by the classification recalled below. We then concentrate on the case $q\neq1$. Define the derivations $D_x$ and $D_y$ by
\[
D_x(x)=x,
\quad
D_x(y)=0,
\]
and
\[
D_y(x)=0,
\quad
D_y(y)=y.
\]
We prove that the locally finite derivations of the quantum plane are precisely the derivations
\[
\lambda D_x+\mu D_y,
\quad
\lambda,\mu\in\Bbbk.
\]
In particular,
\[
\LFD\bigl(\Bbbk_q[x,y]\bigr)
=
\Bbbk D_x\oplus\Bbbk D_y.
\]
We also determine the locally finite inner derivations, proving that an inner derivation $\ad_a$ is locally finite if and only if $a$ belongs to the center of the quantum plane. Our proof treats separately the cases where $q$ is or is not a root of unity. In the root-of-unity case, the center of the quantum plane is a polynomial algebra in two variables, and the restriction of a locally finite derivation to the center allows us to use the commutative classification due to van den Essen. We also explain how the same description can be recovered from \cite[Proposition 2.3]{SuarezVivas} by viewing the quantum plane as a quantum generalized Weyl algebra.

In Section \ref{Thefirstquantum Weylalgebra}, we consider the first quantum Weyl algebra
\[
A_q^1(\Bbbk)
=
\Bbbk\langle X,Y\mid YX-qXY=1\rangle.
\]
When $q=1$, this is the classical first Weyl algebra, whose locally finite derivations are described by Dixmier's classification \cite{Dixmier}, as we explain in Section \ref{Thedifferential algebraA_h}.

For $q\neq1$, we use the presentation of $A_q^1(\Bbbk)$ as a quantum generalized Weyl algebra and recall the classification of its locally finite derivations due to Suárez-Alvarez and Vivas \cite{SuarezVivas}. In particular, every locally finite derivation is a scalar multiple of the Eulerian derivation, and there are no nonzero locally nilpotent derivations. When $q$ is not a root of unity, we complement this description by using the classification of skew derivations of the quantum disc obtained by Almulhem and Brzeziński \cite[Theorem 6.2]{bre}, from which we deduce that
\[
\Der\bigl(A_q^1(\Bbbk)\bigr)
=
\operatorname{Inn}\bigl(A_q^1(\Bbbk)\bigr)
\oplus
\Bbbk H.
\]

The main results of this paper are established in Section \ref{Thedifferential algebraA_h}, where we study the differential Ore extensions $A_h=\Bbbk[x][t;h(x)\partial_x]$, with $h$ nonconstant. The case $h\notin\Bbbk$ is not covered by the results of Suárez-Alvarez and Vivas \cite{SuarezVivas} on locally finite derivations of quantum generalized Weyl algebras , since $A_h$ is not a generalized Weyl algebra in this case, as shown by Benkart, Lopes and Ondrus \cite[Theorem 9.3]{BLO2015}. For every $g(x)\in\Bbbk[x]$, let $D_{g(x)}$ be the locally nilpotent derivation of $A_h$ defined by
\[
D_{g(x)}(x)=0,
\quad
D_{g(x)}(t)=g(x).
\]

Using Nowicki's description of the derivations of $A_h$ \cite{Nowicki}, we classify the locally finite derivations in both the square-free and non-square-free cases. If $h$ is square-free and $\deg h\geq2$, we prove that
\[
\LFD(A_h)=\LND(A_h)=\{D_{g(x)}\mid g(x)\in\Bbbk[x]\}.
\]
Thus, this family provides a natural setting in which local finiteness and local nilpotence coincide. Furthermore, if $\deg h=1$, then
\[
\LFD(A_h)
=
\{\lambda\ad_t+D_{g(x)}
\mid
\lambda\in\Bbbk,\ g(x)\in\Bbbk[x]\}.
\]

In the non-square-free case, with $\psi=\gcd(h,h')$ and $g=h/\psi$, an analogous phenomenon occurs: if $\deg g\geq2$, then local finiteness again coincides with local nilpotence, and the locally finite derivations are precisely the derivations $D_{p(x)}$, with $p(x)\in\Bbbk[x]$. If $\deg g=1$, then an additional derivation arising from Nowicki's
decomposition occurs, and
\[
\LFD(A_h)=\{\lambda E_t+D_{p(x)}
\mid
\lambda\in\Bbbk,\ p(x)\in\Bbbk[x]\}.
\]

In Section \ref{Thedifferential algebraA_h}, we also investigate $\LFD(A_h)$ as a family of derivations. In general, the set of locally finite derivations of an algebra need not be closed under addition or under the Lie bracket. In the present setting, we prove that $\LFD(A_h)$ is a solvable Lie subalgebra of $\Der(A_h)$. Applying a result of Chitayat, Daigle, and Regeta \cite[Corollary 1(b)]{CDR}, we conclude that $\LFD(A_h)$ is weakly locally finite. On the other hand, we show that it is not locally finite as a set of derivations.

To conclude the introduction, we recall van den Essen's classification of locally finite derivations of $\Bbbk[x,y]$ \cite{V92}, which will be used in the proof of our result for the quantum plane.

\begin{theorem}[{\cite[Corollary 4.7]{V92}}]\label{v92}
Let $\Bbbk$ be a field of characteristic zero and let $D\in \Der(\Bbbk[x,y])$ be a locally finite derivation. Then, there exists an automorphism $\varphi\in \Aut_\Bbbk(\Bbbk[x,y])$ such that $\varphi D\varphi^{-1}$ is one of the following derivations:
\begin{enumerate}
\item $D=(\alpha x+\alpha'y)\partial_x+(\beta x+\beta'y)\partial_y,
\ \  \alpha,\alpha',\beta,\beta'\in \Bbbk;$
\item $D=\partial_x+\beta y\,\partial_y,
\ \ \beta\in \Bbbk;$
\item $D=\alpha x\,\partial_x+(\alpha my+x^m)\partial_y,
\ \ \alpha\in \Bbbk, \ \  m\ge 1;$
\item $D=f(x)\,\partial_y,
\ \  f(x)\in \Bbbk[x].$
\end{enumerate} \hfill\qedsymbol
\end{theorem}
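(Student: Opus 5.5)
We would reconstruct van den Essen's argument from the theory of polynomial flows and algebraic group actions, working over $\Bbbk$ algebraically closed, which is the paper's standing hypothesis. A general field is handled at the end by a descent remark, or simply by the standing convention.

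The starting point is that a locally finite derivation $D$ of $\Bbbk[x,y]$ is exactly the infinitesimal generator of an algebraic action of a commutative algebraic group. The span $V_a=\operatorname{Span}\{D^n(a)\}$ is finite-dimensional for $a=x,y$. Take the Jordan decomposition $D=D_s+D_n$ of the restriction of $D$ to the $D$-stable subspace $V=V_x+V_y$. This decomposition extends to all of $\Bbbk[x,y]$, because $D_s$ and $D_n$ are polynomials in $D$ on each finite-dimensional stable subspace. Then $D_s$ is a semisimple (diagonalizable) derivation, $D_n$ is a locally nilpotent derivation, and $[D_s,D_n]=0$. Equivalently, $\exp(sD)$ defines an action of a commutative group $\mathbb{G}_m^r\times\mathbb{G}_a^{\epsilon}$ on $\mathbb{A}^2$, with $\epsilon\in\{0,1\}$.

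We then split into cases.

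\emph{Case $D_s=0$.} Here $D$ is locally nilpotent. By Rentschler's theorem, after an automorphism $D=f(x)\partial_y$, which is type (4).

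\emph{Case $D_n=0$.} Here $D$ is semisimple, so the closure of $\{\exp(sD)\}$ is a torus acting on $\mathbb{A}^2$. Every algebraic torus action on $\mathbb{A}^2$ is linearizable, a classical fact that follows from Jung--van der Kulk amalgamated product structure (a reductive subgroup fixes a vertex of the Bass--Serre tree, so it is conjugate into the affine or triangular subgroup) together with diagonalizing the resulting triangular action. So $D$ becomes diagonal linear, which is type (1).

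\emph{Case $D_s\neq0$ and $D_n\neq0$.} Conjugate so that $D_n=f(x)\partial_y$. The kernel of $D_n$ is $\Bbbk[x]$, and $D_s$ commutes with $D_n$, so $D_s$ preserves $\ker D_n=\Bbbk[x]$. Since $D_s$ is semisimple, $D_s(x)$ is a polynomial in $x$ spanning a finite $D_s$-stable space of $\Bbbk[x]$. This forces $D_s(x)=\alpha x+c$, and after translating $x$ (or swapping roles when $\alpha=0$), $D_s(x)=\alpha x$ or $D_s(x)=c$. Writing $D_s(y)=a(x)y+b(x)$, where the form is forced by triangularity, the commutation $[D_s,f\partial_y]=0$ gives
\[
D_s(f)=a f .
\]
Semisimplicity then lets us diagonalize: we conjugate by a triangular automorphism $y\mapsto y+r(x)$ to remove the non-eigen part of $b$. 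Solving these conditions yields three outcomes:
\begin{enumerate}
\item $f$ constant and $D_s(x)=0$, which gives type (2) after normalization;
\item $f=x^m$ with $a=\alpha m$, which gives type (3);
\item otherwise the linear type (1).
\end{enumerate}

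The main obstacle is the linearization of the torus in the semisimple case. We would cite Jung--van der Kulk via the amalgamated product and the fixed-point theorem for finite-type/reductive subgroups acting on trees, rather than reprove it. The bookkeeping in the mixed case, normalizing $f$ to a monomial via the eigen-condition, is routine but must be done carefully.
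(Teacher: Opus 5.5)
Your architecture is viable, and it is a different route from anything in the paper: the paper gives no proof beyond quoting van den Essen, whose treatment is phrased through planar polynomial flows. Splitting $D=D_s+D_n$ into commuting semisimple and locally nilpotent derivations isolates the two deep inputs, Rentschler's theorem and the linearization of torus actions on $\mathbb{A}^2$. Everything else reduces to a centralizer computation. Note that $D_s$ is a derivation because the generalized eigenspaces $P_\lambda$ of $D$ in $\Bbbk[x,y]$ satisfy $P_\lambda P_\mu\subseteq P_{\lambda+\mu}$, not because a decomposition on $V_x+V_y$ ``extends''. The real problem is the mixed case, which is exactly where types (2) and (3) are produced: as written it is resolved incorrectly.

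After normalizing $D_n=f(x)\partial_y$ with $f\neq0$, you have $D_s(x)=\alpha x+c$. Semisimplicity forces $c=0$ when $\alpha=0$, so there is no branch with $D_s(x)=c\neq0$. Commutation on $y$ gives $D_s(y)=a(x)y+b(x)$ with $af=\alpha xf'$.

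\emph{The case $\alpha=0$ is empty.} Here $a=0$, so $D_s^2(y)=D_s(b(x))=0$, and semisimplicity gives $b=0$, i.e.\ $D_s=0$. So your outcome ``$f$ constant and $D_s(x)=0$, giving type (2)'' cannot occur.

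\emph{The case $\alpha\neq0$.} Here $f=cx^m$ with $m\ge0$ and $a=\alpha m$. Conjugating by $y\mapsto y+r(x)$, which fixes $D_n$, removes every $b_kx^k$ with $k\neq m$. The resonant term $b_mx^m$ cannot be conjugated away, but it must vanish: otherwise $D_s-\alpha m$ is a nonzero nilpotent on $\operatorname{Span}\{y,x^m\}$. This gives $D=\alpha x\partial_x+(\alpha my+cx^m)\partial_y$.
\begin{itemize}
\item For $m\ge1$, rescaling $y$ gives type (3).
\item For $m=0$, $D=\alpha x\partial_x+c\,\partial_y$, which is type (2) with $\beta=\alpha\neq0$ after swapping and rescaling the variables.
\end{itemize}
So type (2) comes from the $m=0$ instance of your monomial branch. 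There is also no residual ``otherwise linear'' outcome: a non-semisimple linear derivation with $D_s\neq0$ is already type (3) with $m=1$.

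Finally, the statement is over an arbitrary field of characteristic zero, and conjugacy over $\overline{\Bbbk}$ does not formally descend to $\Bbbk$, so a ``descent remark'' is not enough. You can either restrict to the paper's algebraically closed setting, which is all that is used later, or rerun the argument over $\Bbbk$. Jordan--Chevalley, Rentschler and Jung--van der Kulk all hold over $\Bbbk$; only the semisimple case then needs a rational linearization, possibly of a non-split torus. Also, $\exp(sD)$ is not defined for non-nilpotent $D$ over an abstract field, so describe the torus through the eigenvalue grading of $D_s$ instead.
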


\section{Quantum Plane}\label{QuantumPlane}

Let 
\[
A=\Bbbk_q[x,y]=\Bbbk\langle x,y\mid yx=qxy\rangle,
\]
where $q\in\Bbbk^\ast\setminus\{1\}$. When $q=1$, the algebra 
$A$ is the commutative algebra $\Bbbk[x,y]$, whose locally finite derivations were recalled in Theorem \ref{v92}. Define the derivations $D_x,D_y\in\Der(A)$ by
\[
D_x(x)=x,
\quad
D_x(y)=0,
\]
and
\[
D_y(x)=0,
\quad
D_y(y)=y.
\]

We recall that the center of $A$ is given by $Z(A)=\Bbbk$
when $q$ is not a root of unity, while, if $q$ is a root of unity of order $t$, then $Z(A)=\Bbbk[x^t,y^t]$.

We denote by
\[
\operatorname{Inn}(A)=\{\ad_a\mid a\in A\}
\]
the set of all inner derivations of $A$. By Alev and Chamarie \cite[Theorem 1.2 and Corollary 1.3.3.1]{Alev}, the derivations of the quantum plane admit the decomposition
\begin{equation}\label{estruturaderivaca}
    \Der(A)=\operatorname{Inn}(A)\oplus Z(A)D_x\oplus Z(A)D_y.
\end{equation}

Consequently, every derivation $D\in\Der(A)$ can be written uniquely in the form
\[
D=fD_x+gD_y+\ad_a,
\]
where $f,g\in Z(A)$ and $a\in A$ is determined modulo $Z(A)$.

\smallskip

The main result of this section gives a complete description of the locally finite derivations of the quantum plane.

\begin{theorem} \label{LFquantum}
Let $A=\Bbbk_q[x,y]$. Then:
\begin{enumerate}
\item $D_x$ and $D_y$ are locally finite;
\item Let $a \in A$, then $\ad_a$ is locally finite if and only if $a\in \centro(A)$.
\end{enumerate}
Moreover, if $D=fD_x+gD_y+\ad_a \in \Der(A)$,
where $f,g\in \centro(A)$ and $a\in A$, then, $D$ is locally finite if and only if $D=\lambda D_x+\mu D_y$ for some $\lambda, \mu\in\Bbbk$.
\end{theorem}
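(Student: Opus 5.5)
The plan is to exploit the $\mathbb{Z}^2$-grading of $A$ by the monomials $x^iy^j$, which form a $\Bbbk$-basis. Items 1 and 2 will then come out of the same computation as the final statement.

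For item 1, a direct computation gives $D_x(x^iy^j)=i\,x^iy^j$ and $D_y(x^iy^j)=j\,x^iy^j$. Both derivations are therefore diagonalizable on the monomial basis, hence locally finite. For the general statement, write $f,g\in\centro(A)$ and $a\in A$ as sums of monomials. From $yx=qxy$ one gets
\[
[x^ky^l,x^iy^j]=(q^{li}-q^{kj})\,x^{i+k}y^{j+l},
\]
and $x^ky^l D_x(x^iy^j)=i\,x^{i+k}y^{j+l}$, with the analogous formula for $D_y$. Hence $D=fD_x+gD_y+\ad_a$ decomposes as $D=\sum_{(k,l)}D_{(k,l)}$, where $D_{(k,l)}$ raises the bidegree by $(k,l)\in\mathbb{N}^2$ and acts by
\[
D_{(k,l)}(x^iy^j)=c_{k,l}(i,j)\,x^{i+k}y^{j+l},\qquad c_{k,l}(i,j)=\alpha i+\beta j+\gamma(q^{li}-q^{kj}).
\]
Here $\alpha,\beta,\gamma$ are the coefficients of $x^ky^l$ in $f$, $g$, $a$ respectively, and $\alpha,\beta$ can be nonzero only when $x^ky^l$ is central. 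The shift $(0,0)$ contributes exactly $\lambda D_x+\mu D_y$, where $\lambda,\mu$ are the constant terms of $f,g$; the constant term of $a$ contributes nothing.

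The core step is to show that every component $D_{(k,l)}$ with $(k,l)\neq(0,0)$ vanishes. Fix a weight $w(k,l)=k+\varepsilon l$ with $\varepsilon>0$ irrational, so that distinct shifts have distinct weights. Suppose some nonzero-shift component is nonzero, and let $(k,l)$ be the nonzero shift of maximal weight among them. Every nonzero shift lies in $\mathbb{N}^2\setminus\{0\}$ and so has positive weight. Consequently, for a monomial $b=x^iy^j$, the component of $D^m(b)$ of highest weight is
\[
D_{(k,l)}^m(b)=\prod_{s=0}^{m-1}c_{k,l}(i+sk,j+sl)\;x^{i+mk}y^{j+ml},
\]
provided this product is nonzero. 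In that case $D^m(b)$ has strictly increasing top weight, so the span of the $D^m(b)$ is infinite-dimensional. It therefore suffices to exhibit $(i,j)$ with $c_{k,l}(i+sk,j+sl)\neq0$ for all $s\ge0$. Using $q^{l(i+sk)}-q^{k(j+sl)}=q^{skl}(q^{li}-q^{kj})$, we can write
\[
c_{k,l}(i+sk,j+sl)=(\alpha i+\beta j)+s(\alpha k+\beta l)+\gamma\,q^{skl}(q^{li}-q^{kj}).
\]
This is the main obstacle: the expression mixes a linear function of $s$ with an exponential one, and we must choose $(i,j)$ so that it never vanishes.

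The case split I would try is as follows. If $\gamma(q^{li}-q^{kj})=0$ for all $(i,j)$, then $x^ky^l$ is central and $(\alpha,\beta)\neq(0,0)$. Taking $i,j$ with $\alpha i+\beta j\neq0$ and scaling $(i,j)$ by a large integer makes the linear expression nonzero for all $s\ge0$; only finitely many bad scalings need to be avoided, since $\operatorname{char}\Bbbk=0$. Otherwise, $q^{li}-q^{kj}\neq0$ for some $(i,j)$, and this condition depends only on $(i,j)$ modulo the order of $q$, or is generic if $q$ is not a root of unity. If $\alpha k+\beta l\neq 0$, I would replace $(i,j)$ by $(i,j)+N(t,t)$, with $t$ the order of $q$, or $N$ large and generic otherwise; this keeps the exponential term bounded in a finite set while making the linear term dominate. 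If $\alpha k+\beta l=0$, the term becomes $(\alpha i+\beta j)+\gamma q^{skl}(q^{li}-q^{kj})$, which takes finitely many values as $s$ varies, and perturbing $(i,j)$ in the same way avoids zero.

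Granting this step, all nonzero-shift components vanish, so $f=\lambda$, $g=\mu$ and $a\in\centro(A)$, that is, $D=\lambda D_x+\mu D_y$. The converse direction follows from item 1. Item 2 is the special case $f=g=0$: $\ad_a$ is locally finite if and only if every nonconstant monomial of $a$ is central, i.e.\ $a\in\centro(A)$.
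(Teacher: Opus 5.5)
Your $\mathbb{N}^2$-grading strategy is sound, but the step you single out as the main obstacle is argued incorrectly, even though the claim itself is true and easy. The linear and exponential parts never actually mix. As you noted, $\alpha,\beta$ can be nonzero only when $x^ky^l$ is central, and then $q^k=q^l=1$, so $q^{li}-q^{kj}=0$ for every $(i,j)$.

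If $x^ky^l$ is not central, the coefficient is simply $\gamma q^{skl}(q^{li}-q^{kj})$. This is nonzero for all $s$ once it is nonzero at $s=0$, so $b=x$ or $b=y$ works; this is exactly the paper's choice in item 2. Your subcases there are vacuous. Moreover, the remarks that the linear term ``dominates'' and that $q^{skl}$ takes finitely many values would be wrong anyway: $\Bbbk$ carries no order or absolute value, and $q^{skl}$ takes infinitely many values when $q$ is not a root of unity and $kl\neq0$.

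If $x^ky^l$ is central, the coefficient is $L(i,j)+sL(k,l)$ with $L(i,j)=\alpha i+\beta j$, and here your scaling argument fails. If $L(i,j)=-L(k,l)$, then replacing $(i,j)$ by $N(i,j)$ gives $(s-N)L(k,l)$, which vanishes at $s=N$ for \emph{every} $N$. For instance, take $q=-1$, $D=x^2D_x-2x^2D_y$ (so $(k,l)=(2,0)$ and $L(i,j)=i-2j$), and the admissible choice $(i,j)=(0,1)$. Then $D^{N+1}(y^N)=0$ for all $N$. The repair is one line: if $L(k,l)\neq0$, take $b=x^ky^l$ itself, so the coefficients are $(s+1)L(k,l)\neq0$; if $L(k,l)=0$, take any $(i,j)$ with $L(i,j)\neq0$.

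With that fix your proof is complete, and for the final statement it differs genuinely from the paper's. Items 1 and 2 are handled essentially as in the paper, via a graded monomial order and the largest noncentral monomial of $a$. For the ``moreover'' part, the paper does the following when $q$ is a root of unity. It restricts $D$ to $\centro(A)=\Bbbk[x^{t_0},y^{t_0}]$. It then applies van den Essen's classification (Theorem \ref{v92}) together with the Darboux property of $x^{t_0},y^{t_0}$ to force $f,g\in\Bbbk$. Only then does it rerun the leading-term argument for $\lambda D_x+\mu D_y+\ad_a$.

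Your decomposition into shift components treats $fD_x+gD_y$ and $\ad_a$ uniformly. It therefore needs no external classification, no root-of-unity case split, and no inspection of the four normal forms, a step the paper only sketches. It also shows directly why derivations such as $x^{t_0}D_x$ are not locally finite. The paper's route, in exchange, ties the result to the commutative classification on the center.
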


\begin{proof}
Let $\mathcal{B}= \{x^i y^j\mid i,j\ge 0\}$ be a PBW basis of $A$ and fix a monomial order on $\mathcal{B}$ induced by the order on $\mathbb{N}^2$ defined by $(i,j)<(r,s)$ if $i+j<r+s$, or if $i+j=r+s$ and $i<r$. By the Leibniz rule, $D_x(x^iy^j)=ix^iy^j$ and $D_y(x^iy^j)=jx^iy^j$
for all $i,j\ge 0$. Thus, by induction, $D_x^n(x^iy^j)=i^nx^iy^j$ and $D_y^n(x^iy^j)=j^nx^iy^j$ for every $n\ge 0$. So, for each $u=\sum_{i,j} c_{ij}x^i y^j$ we have $D_x^n(u)=\sum_{i,j} c_{ij}i^nx^iy^j$ and $D_y^n(u)=\sum_{i,j} c_{ij}j^nx^iy^j$, which is contained in the finite-dimensional space spanned by the same monomials appearing in $u$.

If $a\in \centro(A)$, then $\ad_a=0$, which is obviously locally finite. On the other hand, suppose that $a\notin \centro(A)$. Let $a=\sum_{i,j} a_{ij}x^i y^j$ and take a maximal monomial $x^r y^s \notin \centro(A)$ such that $x^r y^s$ is in the support of $a$. Since $x^ry^s \notin \centro(A)$, either $q^r-1\ne 0$ or $q^s-1\ne 0$. Indeed, if $q^r=q^s=1$, then $x^ry^s$ commutes with both $x$ and $y$, and, hence, it would belong to $\centro(A)$, a contradiction.

We denote
\[
b:=
\begin{cases}
x,& \text{if } q^s\neq 1,\\
y,& \text{if } q^s=1.
\end{cases}
\]
In the second case we necessarily have $q^r\neq 1$.
Let $b=x^{i'}y^{j'}$, so $(i',j')=(1,0)$ or $(i',j')=(0,1)$. For every $i,j\ge 0$, one has $[x^r y^s,x^i y^j]= (q^{si}-q^{rj})x^{r+i}y^{s+j}$.  By construction, $q^{si'}-q^{rj'}\neq 0$ and then $[x^r y^s,b]\neq 0$.

We claim that, for every $n\ge 0$, the leading term of $(\ad_a)^n(b)$ is
\[
\lambda_n x^{i'+nr}y^{j'+ns},
\]
for some $\lambda_n\in \Bbbk^*$. For $n=0$, the claim holds with $\lambda_0=1$. Assume now that it holds for some $n\ge 0$. Since $x^r y^s$ is maximal among the noncentral monomials occurring in $a$, and the central monomials commute with every element of $A$, the leading term of
\[
(\ad_a)^{n+1}(b)=[a,(\ad_a)^n(b)]
\]
comes from commuting $x^r y^s$ with the leading monomial $x^{i'+nr}y^{j'+ns}$. Moreover,
\[
[x^r y^s,x^{i'+nr}y^{j'+ns}]
=
\big(q^{s(i'+nr)}-q^{r(j'+ns)}\big)x^{i'+(n+1)r}y^{j'+(n+1)s},
\]
and $q^{s(i'+nr)}-q^{r(j'+ns)}=q^{nrs}(q^{si'}-q^{rj'})$. Since $q^{si'}-q^{rj'}\neq 0$, this coefficient is nonzero. In particular,
$x^{i'+nr}y^{j'+ns}$ is not central, because otherwise it would commute with
$x^r y^s$. Therefore the leading term of $(\ad_a)^{n+1}(b)$ is a nonzero scalar multiple
of $x^{i'+(n+1)r}y^{j'+(n+1)s}$, which proves the induction.

Thus, the leading monomials of the elements $(\ad_a)^n(b)$, $n\ge 0$, are pairwise distinct. Then, these elements are linearly independent over
$\Bbbk$, and so $\ad_a$ is not locally finite.

For the last statement, by \eqref{estruturaderivaca}, each derivation of $A$ has the form $D=fD_x+gD_y+\ad_a,$  with $f,g\in \centro(A)$ and $ a\in A$. If $t_0=0$, then $\centro(A)=\Bbbk$, so there is nothing to prove. Assume now that $t_0>0$ and let $u:=x^{t_0}$ and $v:=y^{t_0}$. Since $\ad_a$ vanishes on the center, the restriction of $D$ to $\centro(A)$ is
\[
\delta:=D|_{\centro(A)}=t_0uf\partial_u+t_0vg\partial_v.
\]
Indeed, $\delta(u)=D(u)=t_0fu$ and $\delta(v)=D(v)=t_0gv$.

If $D$ is locally finite, then $\delta$ is locally finite on $\Bbbk[u,v]$. Moreover, $u$ and $v$ are Darboux polynomials  for $\delta$. By Theorem \ref{v92}, there exists an automorphism $\varphi\in \Aut(\Bbbk[u,v])$ such that, where $U:=\varphi(u)$ and $V:=\varphi(v)$, so the derivation $\eta:=\varphi\delta\varphi^{-1}$ has one of the forms given in Theorem \ref{v92} of van den Essen.

The cases {\rm(ii)}, {\rm(iii)}, and {\rm(iv)} of Theorem \ref{v92} are excluded by the fact that $U$ and $V$ are Darboux polynomials for $\eta$: in each of these cases, at most one of $\partial_U,\partial_V$ preserves a proper factorization of $U$ or of $V$ as a Darboux polynomial of $\eta$ with a nonconstant cofactor, and a direct check on the four normal forms shows neither $U$ nor $V$ can simultaneously be a Darboux polynomial for both coordinate directions unless the derivation is diagonal. Hence $\eta$ must be of type~{\rm(i)}. In that case,
\[
\eta=(\alpha U+\alpha'V)\partial_U+(\beta U+\beta'V)\partial_V
\]
where $\alpha,\alpha',\beta,\beta'\in \Bbbk$. Since $U$ is a Darboux polynomial for $\eta$, we have $\eta(U)=cU$ for some $c\in\Bbbk$, i.e. $\alpha U+\alpha'V=cU$; as $U,V$ are algebraically independent this forces $\alpha'=0$. Symmetrically, $V$ being Darboux for $\eta$ forces $\beta=0$. So
\[
\eta=\alpha U\,\partial_U+\beta'V\,\partial_V.
\]

Pulling back by $\varphi^{-1}$, we obtain $\delta= \alpha u\partial_u+\beta'v\partial_v=t_0uf\partial_u+t_0vg\partial_v$, so we have that $f, g \in \Bbbk$.  Thus, if $D$ is locally finite, then $D=\lambda D_x+\mu D_y + \ad_a$ for some $\lambda, \mu\in\Bbbk$ and $a \in A$.

If $a\notin \centro(A)$, choose $b$ as in part {\rm (2)}: since $D_x,D_y$ act diagonally on the PBW basis $\mathcal B$ (each monomial is an eigenvector), $\lambda D_x+\mu D_y$ cannot cancel the strictly increasing leading terms of $(\ad_a)^n(b)$ produced in part (2); hence $D=\lambda D_x+\mu D_y+\ad_a$ is not locally finite, a contradiction. Thus $a\in \centro(A)$, and hence $\ad_a=0$. Conversely, every derivation of form $D=\lambda D_x+\mu D_y$ with $\lambda, \mu\in\Bbbk$ is locally finite by first part. This establishes the result.

\end{proof}

\begin{example} Let $A=\Bbbk_q[x,y]$ and assume that $q$ is a root of unity of order $t_0\ge 1$. Then $x^{t_0}\in \centro(A)$. Consider the derivation $D=x^{t_0}D_x$. Note that the derivation $D$ is not locally finite. Indeed, since for every $m\ge 1$ we have $D(x^m)=mx^{m+t_0}$ it follows by induction that
\[
D^n(x)=\Big(\prod_{k=0}^{n-1}(kt_0+1)\Big)x^{nt_0+1}
\]
for every $n\ge 0$.

Therefore, the elements in the set $\{D^n(x)\mid n\ge 0\}$ are linearly independent over $\Bbbk$ (their degrees $nt_0+1$ are pairwise distinct, and since $\Bbbk$ has characteristic zero, $kt_0+1\neq 0$ for every $k\ge0$, so no coefficient vanishes). Then, $D$ is not locally finite.
\end{example}

\begin{remark}
Theorem \ref{v92} shows that, in the commutative polynomial ring $\Bbbk[x,y]$, locally finite derivations admit, up to automorphism, several possible forms. In particular, the first family contains the diagonal derivations $D=\alpha X\partial_X+\beta Y\partial_Y,$ which are obtained by taking $\alpha'=0$ and $\beta=0$. Theorem \ref{LFquantum} shows that, in the quantum plane $\Bbbk_q[x,y]$, the locally finite derivations are exactly the analogues of these diagonal derivations.
\end{remark}

The classification established in Theorem~\ref{LFquantum} was obtained without identifying the quantum plane with a quantum generalized Weyl algebra. Our proof treats separately the cases where $q$ is or is not a root of unity and, in the root-of-unity case, uses the classification of locally finite derivations of the commutative polynomial algebra due to van den Essen, recalled in Theorem \ref{v92}. The same classification can also be deduced from \cite[Proposition 2.3]{SuarezVivas}, as explained in the following remark.

\begin{remark} 
The description of the locally finite derivations in Theorem \ref{LFquantum} can also be deduced from \cite[Proposition 2.3]{SuarezVivas}. We recall that in the notation of Suárez-Alvarez and Vivas, the quantum generalized Weyl algebra $A(\Bbbk[h],q,a)$ is generated by $u,v,h$, subject to
\[
hu=quh,
\quad
vh=qhv,
\quad
uv=a(h),
\quad
vu=a(qh),
\]
where $a(h)\in\Bbbk[h]$ is called the defining polynomial. So, the quantum plane $A=\Bbbk_q[x,y]=\Bbbk\langle x,y\mid yx=qxy\rangle$ is isomorphic to the quantum generalized Weyl algebra $A(\Bbbk[h],q,h)$. Indeed, in this case the defining polynomial is $a(h)=h$, and the defining relations become
\[
hu=quh,
\quad
vh=qhv,
\quad
uv=h,
\quad
vu=qh.
\]
Therefore, the  map determined by
\[
u\longmapsto x,
\quad
v\longmapsto y,
\quad
h\longmapsto xy,
\]
provides an isomorphism. From this, \cite[Proposition 2.3(ii)]{SuarezVivas} implies that every locally finite derivation of $A$ is a linear combination of the derivations $\xi$ and $\tau$ defined by
\[
\xi(u)=u,
\quad
\xi(v)=-v,
\quad
\xi(h)=0,
\]
and
\[
\tau(u)=0,
\quad
\tau(v)=v,
\quad
\tau(h)=h.
\]
Under the preceding isomorphism, these derivations correspond to
\[
\xi=D_x-D_y
\quad\text{and}\quad
\tau=D_y,
\]
providing that $\operatorname{Span}_{\Bbbk}\{\xi,\tau\}
=\operatorname{Span}_{\Bbbk}\{D_x,D_y\}$, and then
\[
\LFD(A)=\Bbbk D_x\oplus\Bbbk D_y.
\]
This argument applies to every $q\in\Bbbk^\ast\setminus\{1\}$, whether or not $q$ is a root of unity.  
\end{remark}

\section{The first quantum Weyl algebra}\label{Thefirstquantum Weylalgebra}

The first quantum Weyl algebra is the $\Bbbk$-algebra
\[
A_q^1(\Bbbk)
=
\Bbbk\langle X,Y\mid YX-qXY=1\rangle.
\]

The case $q=1$ is the classical first Weyl algebra, whose locally finite derivations were discussed in the preceding section using Dixmier's classification \cite{Dixmier}. Our aim in this section is to obtain two complementary results. First, when $q$ is not a root of unity, we use the classification of $\sigma$-derivations of the quantum disc obtained by Almulhem and Brzeziński to describe all ordinary derivations of $A_q^1(\Bbbk)$. We then use the classification of locally finite derivations of quantum generalized Weyl algebras due to Suárez-Alvarez and Vivas (\cite[Proposition 2.3]{SuarezVivas}) to determine the locally finite derivations of $A_q^1(\Bbbk)$ for every $q\in\Bbbk^\ast\setminus\{1\}$, including the case where $q$ is a root of unity.

We begin by recalling explicitly the realization of
$A_q^1(\Bbbk)$ as a quantum generalized Weyl algebra. Recall that the algebra $A(\Bbbk[h],q,a)$ is generated by $h,u,v$, subject to the relations
\[
hu=quh,
\quad
vh=qhv,
\quad
uv=a(h),
\quad
vu=a(qh).
\]

In $A_q^1(\Bbbk)$, we denote
\[
h=1+(q-1)XY,
\quad
u=X,
\quad
v=(q-1)Y.
\]

Using $YX=qXY+1$, we obtain
\[
uv=(q-1)XY=h-1
\]
and
\[
vu=(q-1)YX=qh-1.
\]

Moreover, $hu=quh$ and $vh=qhv$. Then,
\[
A_q^1(\Bbbk)
\simeq
A(\Bbbk[h],q,h-1).
\]

In particular, $A_q^1(\Bbbk)$ is a quantum generalized Weyl algebra whose defining polynomial $a(h)=h-1$ is not a monomial. We also define the Eulerian derivation $H\in\Der(A_q^1(\Bbbk))$
by
\[
H(X)=X,
\quad
H(Y)=-Y.
\]

Notice that, $H(X^iY^j)=(i-j)X^iY^j$ for all $i,j\in\mathbb N_0$. Therefore, $H$ is locally finite.

For the moment, we assume that $q$ is not a root of unity. In order to apply the classification of Almulhem and Brzeziński (\cite[Theorem 6.2]{bre}), it is convenient to use the quantum disc presentation 
\[
R=D_q(x,y)
=
\Bbbk\langle x,y\mid xy-qyx=1-q\rangle.
\]

The map $\Phi\colon R\longrightarrow A_q^1(\Bbbk)$
defined by
\[
\Phi(x)=(1-q)Y,
\quad
\Phi(y)=X
\]
is an isomorphism. Denote $h=1-yx\in R$. Then,
\[
yx=1-h,
\quad
xy=1-qh,
\]
and
\[
xh=qhx,
\quad
yh=q^{-1}hy.
\]

We define $E\in\Der(R)$ by
\[
E(x)=x,
\quad
E(y)=-y.
\]

Under the isomorphism $\Phi$, this derivation corresponds to $-H$. That is, $\Phi E\Phi^{-1}=-H$. In particular, $\Bbbk\Phi E\Phi^{-1}=\Bbbk H$.

For $\mu\in\Bbbk^\ast$, let
\[
\sigma=\sigma_\mu\in\Aut(R),
\quad
\sigma_\mu(x)=\mu^{-1}x,
\quad
\sigma_\mu(y)=\mu y.
\]

Since $q$ is not a root of unity, every automorphism of $R$ is of this form (\cite{SuarezVivas}, Theorem B).

\begin{lemma}[{\cite[Theorem 6.2]{bre}}]\label{brez}
Let $q\in\Bbbk^\ast$ be not a root of unity, let
$R=D_q(x,y)$,  denote $h=1-yx$, and fix $\mu\in\Bbbk^\ast$.
\begin{enumerate}
\item\label{brez-a} For every $f(h)\in\Bbbk[h]$, the map
\[
\delta_f(x)=f(h)x,
\quad
\delta_f(y)=-\mu f(q^{-1}h)y
\]
extend uniquely to a $\sigma_\mu$-derivation of $R$. Then, these are exactly the $\sigma_\mu$-derivations satisfying $\delta_f(h)=0$. Writing $f(h)=\sum_{k\geq0}f_kh^k$, the derivation $\delta_f$ is inner if and only if there is no
$d\in\{0,\ldots,\deg(f)\}$ such that $\mu=q^{-d}$ and $
f_d\neq0$.

\item\label{brez-b}
Suppose that there exists $d\in\mathbb N_0$ such that
$\mu=q^{-d+1}$. For every $a(x)\in\Bbbk[x]$ and $b(y)\in\Bbbk[y]$, the map
\[
\delta(x)=h^d b(y),
\quad
\delta(y)=h^d a(x)
\]
extend uniquely to a $\sigma_\mu$-derivation of $R$. Every nonzero derivation in this family is inner if $d\neq0$ and non-inner if $d=0$.

\item
The two families described in items \ref{brez-a} and \ref{brez-b}, together with the inner $\sigma_\mu$-derivations, exhaust all $\sigma_\mu$-derivations of $R$.
\end{enumerate}
\end{lemma}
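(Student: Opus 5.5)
The plan is to verify the three items of Lemma~\ref{brez} directly in $R=D_q(x,y)$. The basic tools are the PBW-type basis $\{h^k x^i, h^k y^j\}$ of $R$ (as a generalized Weyl algebra over $\Bbbk[h]$) and the commutation rules $xh=qhx$, $yh=q^{-1}hy$, $yx=1-h$, $xy=1-qh$. We always use the $\sigma_\mu$-Leibniz rule $\delta(ab)=\delta(a)b+\sigma_\mu(a)\delta(b)$.

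\textbf{Existence and uniqueness.} For items \ref{brez-a} and \ref{brez-b}, uniqueness is immediate because $x,y$ generate $R$. For existence, define $\delta$ on the free algebra $\Bbbk\langle x,y\rangle$ by the given values and the $\sigma_\mu$-Leibniz rule. It then suffices to check that $\delta$ kills the relator $xy-qyx-(1-q)$, that is,
\[
\delta(x)y+\mu^{-1}x\delta(y)-q\delta(y)x-q\mu y\delta(x)=0.
\]

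For \ref{brez-a}, substitute $\delta(x)=f(h)x$ and $\delta(y)=-\mu f(q^{-1}h)y$. Moving $h$ past $x,y$ turns each term into a polynomial in $h$ times $xy$ or $yx$, and the expression collapses to
\[
f(h)(xy-qyx)-f(h)(xy-qyx)=0.
\]
A similar substitution shows $\delta_f(h)=0$. Conversely, if $\delta(h)=0$, then applying $\delta$ to $hx=q^{-1}xh$ shows that $\delta(x)$ lies in the $q$-eigenspace for conjugation by $h$, so $\delta(x)\in\Bbbk[h]x$. Here we use that $q$ is not a root of unity, so the eigenspaces are exactly the graded pieces $\Bbbk[h]x^i$ and $\Bbbk[h]y^j$. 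With $\delta(x)=f(h)x$ in hand, the relation $yx=1-h$ determines $\delta(y)$.

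For \ref{brez-b}, the same relator computation uses $\mu=q^{-d+1}$ to match the coefficients coming from $h^d$ moving past $x$ and $y$.

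\textbf{Innerness.} An inner $\sigma_\mu$-derivation has the form $\delta_a(r)=ar-\sigma_\mu(r)a$. For \ref{brez-a}, try $a=g(h)$. A direct computation gives
\[
\delta_a(x)=\bigl(g(h)-\mu^{-1}g(q^{-1}h)\bigr)x .
\]
Matching monomials $h^k$, we need $g_k(1-\mu^{-1}q^{-k})=f_k$, which is solvable exactly when $\mu\neq q^{-k}$ or $f_k=0$. One then checks that any other $a$ realizing $\delta_f$ may be projected to its $\Bbbk[h]$-component by $\mathbb Z$-grading, since $\delta_f$ is homogeneous of degree $0$. This gives the "if and only if".

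For \ref{brez-b}, when $d\ge1$ take $a=h^{d-1}\cdot(\text{suitable combination of } b(y) \text{ and } a(x))$ and verify the identity degree by degree. When $d=0$, show noninnerness by the grading argument: the degree-$k$ components of $\delta$ would have to come from degree-$k$ components of $a$, and the coefficient equation forces a contradiction at $h^0$.

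\textbf{Exhaustion (item 3), expected to be the main obstacle.} Decompose an arbitrary $\sigma_\mu$-derivation $\delta$ into $\mathbb Z$-homogeneous components $\delta_n$, where $\deg x=1$ and $\deg y=-1$. This is legitimate because $\sigma_\mu$ preserves the grading and each $\delta_n$ is again a $\sigma_\mu$-derivation; only finitely many components are nonzero since $\delta(x),\delta(y)$ have finite support.

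The degree-$0$ component sends $x\mapsto p(h)x$ and $y\mapsto r(h)y$. Subtracting a suitable inner derivation makes $\delta_0(h)=0$, so $\delta_0$ falls into family \ref{brez-a}.

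For a component of degree $n\neq 0$, write $\delta_n(x)=\beta(h)y^{-n+1}$ and $\delta_n(y)=\alpha(h)y^{-n-1}$ (or the $x$-analogues), and impose the relator equation. This yields a $q$-difference equation in $\Bbbk[h]$. Its solutions are either inner $\sigma_\mu$-derivations $\delta_{c(h)x^m}$ or $\delta_{c(h)y^m}$, or, when $\mu$ hits the resonance $q^{-d+1}$, the terms $h^d b(y)$ and $h^d a(x)$ of family \ref{brez-b}.

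Solving this difference equation carefully is the heavy step. I would first handle monomials $\beta=h^k$ and compare leading and lowest $h$-coefficients, where the non-root-of-unity hypothesis ensures that the factors $1-\mu q^{j}$ vanish for at most one $j$.
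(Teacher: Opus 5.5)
\textbf{Twisting convention.} The paper gives no proof of this lemma. It quotes \cite[Theorem~6.2]{bre} and only uses the case $\mu=1$, where left- and right-twisted Leibniz rules coincide. So the question is whether your direct verification is sound. At the two places where $\mu$ actually matters, it is not. You fix the left-twisted rule $\delta(ab)=\delta(a)b+\sigma_\mu(a)\delta(b)$, but the parameters in the statement are those of the right-twisted rule $\delta(ab)=\delta(a)\sigma_\mu(b)+a\delta(b)$, with inner maps $r\mapsto w\sigma_\mu(r)-rw$.

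Under your rule, item~2 is false. For $d=0$, $\mu=q$, $\delta(x)=1$, $\delta(y)=0$ you get $\delta(xy)=y$ and $\delta(yx)=\mu y=qy$, so $\delta(xy)-q\delta(yx)=(1-q^2)y\neq0$. In general your relator produces the factor $1-\mu q^{1-d}$, so it needs $\mu=q^{d-1}$. Your claim that the computation ``uses $\mu=q^{-d+1}$'' therefore does not hold.

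In item~1, since $xh=qhx$, you have $\delta_a(x)=g(h)x-\mu^{-1}xg(h)=\bigl(g(h)-\mu^{-1}g(qh)\bigr)x$. You wrote $g(q^{-1}h)$, and only this slip produced the stated resonance $\mu=q^{-k}$. Computed correctly, your convention gives the resonance $\mu=q^{k}$. For instance, with $\mu=q^{-1}$ and $f=h$, the element $w=(1-q^2)^{-1}h$ makes $\delta_f$ inner under your rule, while the lemma says it is not.

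Switching to the right-twisted rule fixes both points. For $w=g(h)$ it gives $\delta_w(x)=\bigl(\mu^{-1}g(h)-g(qh)\bigr)x$, so $g_k(\mu^{-1}-q^k)=f_k$ is solvable unless $\mu=q^{-k}$ and $f_k\neq0$. The relator for item~2 becomes $(\mu-q^{1-d})h^d\,b(y)y+(q^d-q\mu^{-1})h^d\,xa(x)$, which vanishes precisely for $\mu=q^{1-d}$.

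\textbf{Item~3.} This is the actual content of the lemma, and your proposal only announces it; two steps of the plan also need repair.

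\emph{Degree zero.} Degree-$0$ inner $\sigma_\mu$-derivations come from $w\in\Bbbk[h]$ and all kill $h$. Subtracting one therefore cannot achieve $\delta_0(h)=0$. Rather, this is automatic: $G:=\delta_0(yx)$ lies in $\Bbbk[h](1-h)$, and the relator gives $\mu G(qh)=qG(h)$. Hence $G$ is a single monomial in $h$ divisible by $1-h$, so $G=0$.

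\emph{Nonzero degrees.} Your exponents are swapped: for $n<0$ one has $\delta_n(x)=\beta(h)y^{-n-1}$ and $\delta_n(y)=\alpha(h)y^{1-n}$. With this set-up the missing computation is short.
\begin{itemize}
\item Subtracting the inner map of $\gamma(h)y^{-n}$ removes every coefficient $\alpha_j$ with $\mu\neq q^{-j}$.
\item At the (at most one) resonant $j$, the $h^{j+1}$-coefficient of the relator forces $\alpha_j(q-q^{n+1})=0$. Since $q^n\neq1$, this gives $\alpha=0$.
\item The relator then reads $\mu\beta(h)=q\beta(q^{-1}h)$. This forces $\beta=c\,h^d$ with $\mu=q^{1-d}$, which is item~2 with $b(y)=cy^{-n-1}$.
\end{itemize}
Positive degrees are symmetric.
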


\begin{proposition}\label{derquantum}
Assume that $q$ is not a root of unity. Then
\[
\Der(A_q^1(\Bbbk))
=
\operatorname{Inn}(A_q^1(\Bbbk))
\oplus
\Bbbk H.
\]
Equivalently, every derivation $D\in\Der(A_q^1(\Bbbk))$
can be written uniquely in the form $D=\ad_w+cH$, where $c\in\Bbbk$ and $w\in A_q^1(\Bbbk)$ is determined modulo the center of $A_q^1(\Bbbk)$.
\end{proposition}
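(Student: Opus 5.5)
Transport the problem to $R=D_q(x,y)$ via $\Phi$ and apply Lemma \ref{brez} with $\mu=1$, so that $\sigma_1=\mathrm{id}$ and $\sigma_1$-derivations are ordinary derivations. By Lemma \ref{brez}(3), every $D\in\Der(R)$ is a sum of an inner derivation, a derivation $\delta_f$ from family (1), and possibly a derivation from family (2).

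\textbf{Family (2).} This family appears only when $1=q^{-d+1}$ for some $d\in\mathbb N_0$. Since $q$ is not a root of unity, this forces $d=1$. Lemma \ref{brez}(2) then says that every nonzero member of the family is inner. So family (2) contributes nothing beyond $\operatorname{Inn}(R)$.

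\textbf{Family (1).} Here $\delta_f(x)=f(h)x$ and $\delta_f(y)=-f(q^{-1}h)y$. By Lemma \ref{brez}(1), $\delta_f$ is inner unless some $d\le\deg f$ has $q^{-d}=1$ and $f_d\neq0$. Since $q$ is not a root of unity, only $d=0$ is possible. Write $f=f_0+h\tilde f(h)$.
\begin{itemize}
\item The assignment $f\mapsto\delta_f$ is linear, so $\delta_f=\delta_{f_0}+\delta_{h\tilde f}$.
\item The polynomial $h\tilde f$ has zero constant term, so $\delta_{h\tilde f}$ is inner.
\item For the constant part, $\delta_{f_0}(x)=f_0x$ and $\delta_{f_0}(y)=-f_0y$, so $\delta_{f_0}=f_0E$.
\end{itemize}
Hence $\Der(R)=\operatorname{Inn}(R)+\Bbbk E$, and applying $\Phi$ gives $\Der(A_q^1(\Bbbk))=\operatorname{Inn}(A_q^1(\Bbbk))+\Bbbk H$, because $\Phi E\Phi^{-1}=-H$.

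\textbf{Directness of the sum.} The same criterion in Lemma \ref{brez}(1) with $f=1$ ($f_0=1\neq0$) says that $E=\delta_1$ is not inner. I would also verify this directly, and this is the step I expect to need the most care.
\begin{itemize}
\item Suppose $\ad_w=cH$ with $c\neq 0$. Decompose $A_q^1(\Bbbk)$ into $H$-weight spaces; this is a grading in which $X$ has degree $1$ and $Y$ has degree $-1$.
\item Since $H$ is a degree-$0$ derivation, the degree-$0$ component $w_0$ of $w$ also satisfies $\ad_{w_0}=cH$.
\item The degree-$0$ part is $\Bbbk[XY]=\Bbbk[h]$. Then $[w_0,X]=cX$ gives $(w_0(h)-w_0(qh))X=cX$, where we use $hX=qXh$.
\item Comparing constant terms in $h$ gives $0=c$, a contradiction.
\end{itemize}

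\textbf{Uniqueness.} If $\ad_w+cH=\ad_{w'}+c'H$, then directness gives $c=c'$ and $\ad_{w-w'}=0$. Thus $w-w'$ is central, so $w$ is determined modulo the center.
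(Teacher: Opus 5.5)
Your proof is correct and follows the paper's argument step for step: you specialize Lemma \ref{brez} to $\mu=1$, note that family (2) occurs only for $d=1$ and is then inner, split $\delta_f=f_0E+\delta_{f-f_0}$, use the $f=1$ case of the criterion to get directness, and transport everything through $\Phi$. Your additional weight-grading check that $H$ is not inner is a correct independent confirmation that the paper does not include. One small slip there: $hX=qXh$ gives $[w_0,X]=(w_0(h)-w_0(q^{-1}h))X$ rather than $(w_0(h)-w_0(qh))X$, but this does not affect the constant-term comparison.
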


\begin{proof}
By applying Lemma \ref{brez} in the case where the twisting automorphism is the identity, we obtain the following description. The first family in that theorem consists of the derivations $\delta_f$, with $f(h)\in\Bbbk[h]$, defined by
\[
\delta_f(x)=f(h)x,
\quad
\delta_f(y)=-f(q^{-1}h)y.
\]

Writing $f(h)=\sum_{r\geq0}f_rh^r$, the Lemma \ref{brez} states that $\delta_f$ is inner if and only if
there is no $r\geq0$ such that $1=q^{-r}$ and $f_r\neq0$. Since $q$ is not a root of unity, the equality
$1=q^{-r}$ holds only when $r=0$. Therefore, $\delta_f$ is inner if and only if $f_0=0$. Writing $f(h)=f_0+(f(h)-f_0)$, we obtain
\[
\delta_f
=
f_0E+\delta_{f-f_0},
\]
where $\delta_{f-f_0}$ is inner.

The second family in Lemma~\ref{brez} occurs, in the ordinary case, only if there exists $d\in\mathbb N_0$ such that $1=q^{-d+1}$. Since $q$ is not a root of unity, this equality implies $d=1$. The same result states that all derivations in this family are inner whenever $d\neq0$. Therefore, in the ordinary case, this family consists entirely of inner derivations.

Since these two families, together with the inner derivations, exhaust all ordinary derivations of $R$, it follows that $\Der(R)=\operatorname{Inn}(R)+\Bbbk E$. The derivation $E$ is not inner, since it corresponds to $f(h)=1$, whose constant coefficient is nonzero. Then, the sum is direct:
\[
\Der(R)
=
\operatorname{Inn}(R)\oplus\Bbbk E.
\]

Transporting this decomposition through $\Phi$ and using
$\Phi E\Phi^{-1}=-H$, we obtain
\[
\Der(A_q^1(\Bbbk))
=
\operatorname{Inn}(A_q^1(\Bbbk))
\oplus
\Bbbk H.
\]
\end{proof}

The following result classifies the locally finite derivations of the first quantum Weyl algebra $A_q^1(\Bbbk)$ for every $q\in\Bbbk^\ast\setminus\{1\}$,
including the case where $q$ is a root of unity.

\begin{proposition}\label{lfquantum}
Let $q\in\Bbbk^\ast\setminus\{1\}$. A derivation $D\in\Der(A_q^1(\Bbbk))$ is locally finite if and only if
$D=cH$ for some $c\in\Bbbk$. Then,
\[
\LFD(A_q^1(\Bbbk))=\Bbbk H.
\]
Moreover,
\[
\LND(A_q^1(\Bbbk))=\{0\}.
\]
\end{proposition}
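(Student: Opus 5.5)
The plan is to transport the problem to the quantum generalized Weyl algebra $A(\Bbbk[h],q,h-1)$ via the isomorphism $u\mapsto X$, $v\mapsto (q-1)Y$, $h\mapsto 1+(q-1)XY$ recalled above, and then invoke \cite[Proposition 2.3]{SuarezVivas}. This result describes the locally finite derivations of a quantum generalized Weyl algebra $A(\Bbbk[h],q,a)$ with $q\neq 1$ and splits according to whether the defining polynomial $a$ is a monomial. For the quantum plane we used the monomial case $a(h)=h$. Here $a(h)=h-1$ has a nonzero constant term, so we are in the non-monomial case. In that case the result should say that the locally finite derivations are exactly the scalar multiples of the derivation $\xi$ with $\xi(u)=u$, $\xi(v)=-v$, $\xi(h)=0$. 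The torus-type derivation $\tau$ does not survive, since it would have to rescale $h$ while preserving $uv=h-1$.

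The first step is to check that hypotheses such as $q\neq1$ and $a$ non-monomial are satisfied for every $q\in\Bbbk^\ast\setminus\{1\}$, including roots of unity. The second step is to pull $\xi$ back through the isomorphism. Since $\xi(X)=X$ and $\xi((q-1)Y)=-(q-1)Y$, we get $\xi=H$, and we should also verify $\xi(h)=H(1+(q-1)XY)=0$, consistent with $H(XY)=0$. This gives $\LFD(A_q^1(\Bbbk))=\Bbbk H$. The converse inclusion is already established: $H(X^iY^j)=(i-j)X^iY^j$, so every $cH$ is locally finite.

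For $\LND$, suppose $D$ is locally nilpotent. Then it is locally finite, so $D=cH$. But $(cH)^n(X)=c^nX$, which vanishes for some $n$ only if $c=0$. Hence $\LND(A_q^1(\Bbbk))=\{0\}$.

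The main obstacle is making sure the cited proposition applies verbatim in the root-of-unity case and for this specific non-monomial $a$, with its normalizations of $u,v,h$ matching ours. If there were any doubt, the fallback for $q$ not a root of unity would be Proposition~\ref{derquantum}. That proposition writes $D=\ad_w+cH$, and we would then show $\ad_w$ is locally finite (and $D$ likewise) only if $w$ is central. For this we would run a leading-term argument as in Theorem~\ref{LFquantum}, using the PBW basis $X^iY^j$ with degree filtration, on which $H$ acts diagonally and so cannot cancel growing leading terms.
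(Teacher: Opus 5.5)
Your proposal is correct and follows the paper's proof essentially step for step. You identify $A_q^1(\Bbbk)$ with $A(\Bbbk[h],q,h-1)$ and invoke \cite[Proposition~2.3]{SuarezVivas} in the non-monomial case, so every locally finite derivation is a multiple of the Eulerian derivation, which is $H$. You use $H(X^iY^j)=(i-j)X^iY^j$ for the converse and deduce $\LND(A_q^1(\Bbbk))=\{0\}$ from $(cH)^n(X)=c^nX$. The paper does not use your fallback through Proposition~\ref{derquantum}; it applies the cited result directly for every $q\neq1$, so the root-of-unity caveat you flag rests on that same citation in the paper.
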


\begin{proof}
As shown above, $A_q^1(\Bbbk)\simeq A(\Bbbk[h],q,h-1)$. The defining polynomial $a(h)=h-1$ is not a monomial. Therefore, \cite[Proposition~2.3]{SuarezVivas} implies that every locally finite derivation of $A_q^1(\Bbbk)$ is a scalar multiple of its Eulerian derivation. Under the preceding identification, this Eulerian
derivation is precisely
\[
H(X)=X,
\quad
H(Y)=-Y.
\]

On the other hand, notice that every scalar multiple of $H$ is locally finite, since $H(X^iY^j)=(i-j)X^iY^j$ for every $i,j\in\mathbb N_0$. Consequently, 
\[
\LFD(A_q^1(\Bbbk))=\Bbbk H.
\]

Finally, every locally nilpotent derivation is locally finite. Thus, a locally nilpotent derivation must have the form $cH$. If $c\neq0$, then $(cH)^n(X)=c^nX\neq0$
for every $n\geq0$. Then, $cH$ is not locally nilpotent. Therefore, 
\[
\LND(A_q^1(\Bbbk))=\{0\}.
\]
\end{proof}

Propositions \ref{derquantum} and \ref{lfquantum} yield the following characterization of locally finite derivations in terms of the decomposition of all  derivations.

\begin{corollary}
Assume that $q$ is not a root of unity and write
$D=\ad_w+cH$ according to Proposition \ref{derquantum}. Then $D$ is locally finite if and only if $\ad_w=0$. Equivalently, $D$ is locally finite if and only if $w\in Z(A_q^1(\Bbbk))$.
\end{corollary}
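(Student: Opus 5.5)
The plan is to combine the two preceding propositions. Since $q$ is not a root of unity, Proposition \ref{derquantum} lets us write every derivation uniquely as $D=\ad_w+cH$ with $c\in\Bbbk$, where $w$ is determined modulo $Z(A_q^1(\Bbbk))$. Proposition \ref{lfquantum} says that $D$ is locally finite if and only if $D=c'H$ for some $c'\in\Bbbk$.

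For the forward implication, suppose $D$ is locally finite. Then $\ad_w+cH=c'H$, so $\ad_w=(c'-c)H$. This derivation lies in $\operatorname{Inn}(A_q^1(\Bbbk))\cap\Bbbk H$, which is zero because the sum in Proposition \ref{derquantum} is direct. Hence $\ad_w=0$.

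For the converse, if $\ad_w=0$ then $D=cH$, which is locally finite by Proposition \ref{lfquantum}, or directly because $H(X^iY^j)=(i-j)X^iY^j$.

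For the equivalent formulation, $\ad_w=0$ means $[w,a]=0$ for all $a$, which is exactly $w\in Z(A_q^1(\Bbbk))$. This condition is well defined because $w$ is only determined modulo the center. For $q$ not a root of unity the center is presumably $\Bbbk$, but the argument does not need this.

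There is no real obstacle here. The only point needing care is the directness of the sum, which rules out $\ad_w$ being a nonzero multiple of $H$; this is supplied by Proposition \ref{derquantum}, where $E$, and hence $H$, was shown not to be inner.
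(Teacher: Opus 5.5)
Your proposal is correct and follows essentially the same route as the paper: Proposition~\ref{lfquantum} reduces local finiteness to $D=c'H$, and directness of the decomposition in Proposition~\ref{derquantum} then forces the inner component $\ad_w$ to vanish, which is equivalent to $w\in Z(A_q^1(\Bbbk))$. The paper's proof is just a terser version of this same argument.
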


\begin{proof}
By Proposition \ref{lfquantum}, the derivation $D$ is locally finite if and only if $D=c'H$ for some $c'\in\Bbbk$. Since the decomposition
\[
\Der(A_q^1(\Bbbk))=\operatorname{Inn}(A_q^1(\Bbbk))
\oplus
\Bbbk H
\]
is direct, this occurs if and only if the inner component of $D$ vanishes, that is, $\ad_w=0$, which is equivalent to
$w\in Z(A_q^1(\Bbbk))$.
\end{proof}

\section{The differential algebra $A_h$}\label{Thedifferential algebraA_h}

Let $\delta=h(x)\partial_x$ be a derivation of $\Bbbk[x]$, where $h(x)\in\Bbbk[x]$. We denote by
\[
A_h=\Bbbk[x][t;\delta]
\]
the corresponding differential Ore extension. Thus, $A_h$ is the $\Bbbk$-algebra generated by $x$ and $t$ subject to the relation $tx=xt+h(x)$.

We first consider the cases where $h$ is constant. If $h=0$, then $A_h=\Bbbk[x,t]$ is the (commutative) polynomial algebra in two variables and its locally finite derivations are classified, up to conjugation by an automorphism, in
Theorem \ref{v92}. If $h\in\Bbbk^*$, then, the algebra $A_h$ is isomorphic to the first Weyl algebra 
\[
A_1=\Bbbk\langle p,q\mid [p,q]=1\rangle,
\]
whose locally nilpotent and locally finite derivations were described as consequences of Dixmier's classification (\cite[Corollary 6.6 and Theorems 9.1 - 9.2]{Dixmier} and the proof of \cite[Corollary 9.3]{Dixmier}). More precisely, assuming that $\Bbbk$ is algebraically closed of characteristic zero, every locally nilpotent derivation of $A_1$ is conjugate under an automorphism of $A_1$ to a derivation of the form  $\ad_{f(p)}$, where $f(p)\in\Bbbk[p]$, and every locally finite derivation of $A_1$, with is not locally nilpotent, is conjugate to $c\ad_{pq}$, with $c\in\Bbbk^*$. 

Therefore, considering that locally nilpotent and locally finite derivations of $A_h$ are already completely known when $h$ is constant, we assume that $h\in\Bbbk[x]\setminus\Bbbk$ in the remainder of this section.

For every $g(x)\in\Bbbk[x]$, consider the derivation $D_{g(x)}$ of
$A_h$ defined by
\[
D_{g(x)}(x)=0
\quad\text{and}\quad
D_{g(x)}(t)=g(x).
\]
These derivations are analogous to the locally nilpotent derivations
$v(x)\partial_y$ of $\Bbbk[x,y]$ and to the inner derivations
$\ad_{v(x)}$ of the first Weyl algebra $A_1$. It is immediate that
\[
D_{g(x)}\in\LND(A_h).
\]

Locally nilpotent derivations also play an important role in the study of automorphisms. Indeed, for every $\partial\in\LND(A_h)$, the map
\[
\exp(\partial)\colon A_h\longrightarrow A_h,
\quad
\exp(\partial)(a)
=
\sum_{k\geq0}\frac{\partial^k(a)}{k!},
\]
is a well-defined automorphism of $A_h$. Moreover, the automorphisms obtained in this way generate a normal subgroup of $\Aut(A_h)$. 

The locally nilpotent derivations of $A_h$ and its Makar-Limanov invariant $\ml(A_h)$ were determined by Kaygorodov, Lopes, and Mashurov in  \cite{ISF2021}, as follows: 

\begin{proposition}[{\cite[Proposition. 2]{ISF2021}}] \label{prop_localnilp}
Let $h \in \Bbbk[x] \setminus \Bbbk$.
Then  
\[
\LND(A_h)=\{D_{g(x)} \mid g(x) \in \Bbbk[x]\}
\]
and $\ml(A_h)=\Bbbk[x]$.
\end{proposition}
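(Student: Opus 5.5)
To prove Proposition~\ref{prop_localnilp}, I would first show that $\Bbbk[x]$ is contained in the kernel of every locally nilpotent derivation. The reverse inclusions are immediate. Each $D_{g(x)}$ is a derivation, since it preserves the relation: $D_{g}(tx-xt)=gx-xg=0=D_g(h(x))$. It is locally nilpotent because it kills $\Bbbk[x]$ and lowers the $t$-degree of $x^it^j$ by one.

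Let $\partial\in\LND(A_h)$. I would use the $t$-degree filtration on $A_h$, with PBW basis $\{x^it^j\}$. Its associated graded algebra is the commutative ring $\Bbbk[x,t]$, since $[t,x]=h$ has lower $t$-degree. Set $d=\deg_t\partial(x)$ and $e=\deg_t\partial(t)-1$. The standard "degree function" argument (as in Makar-Limanov's approach) shows that $\partial$ induces a nonzero homogeneous locally nilpotent derivation $\bar\partial$ of $\operatorname{gr}A_h=\Bbbk[x,t]$, of degree $\max(d,e)$.

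The key step, and the one I expect to be hardest, is to exploit the relation $[t,x]=h(x)$ with $\deg h\ge1$ to show that $\partial(x)\in\Bbbk[x]$. I would apply $\partial$ to the relation to get
\[
[\partial(t),x]+[t,\partial(x)]=h'(x)\partial(x).
\]
In the PBW basis we have $[t,x^it^j]=h(x)\,i x^{i-1}t^j$ and $[x,t^j]=-jh(x)t^{j-1}+\text{lower}$. Comparing top $t$-degree coefficients in this identity then constrains the leading terms. Write $\partial(x)=a_d(x)t^d+\cdots$ and $\partial(t)=b_m(x)t^m+\cdots$. Taking the $t^{d}$-coefficient gives
\[
h\,a_d'-h'a_d=(\text{contribution from }\partial(t)),
\]
where the right-hand side is $m h b_m$ when $m-1=d$. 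If $d\ge1$, I would argue via the graded LND that $\bar\partial(x)$ and $\bar\partial(t)$ cannot both have positive degree in $t$. Rentschler's theorem gives $\ker\bar\partial=\Bbbk[f]$ for some variable $f$. The plan is then to show that $f$ must be $x$ up to scalar, which gives $d=0$, i.e.\ $\partial(x)\in\Bbbk[x]$. If the graded argument becomes messy, I would fall back on a direct argument. Since $\partial$ is locally nilpotent, $\partial(x)\in\Bbbk[x]$ forces $\partial^n(x)=0$ eventually. The restriction of $\partial$ to $\Bbbk[x]$ is then $p(x)\partial_x$ with $p$ nilpotent on $x$, which forces $p\in\Bbbk$.

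Next I would apply the relation again with $\partial(x)=c\in\Bbbk$. This yields $[\partial(t),x]=h'(x)c$. Since $[t^j,x]$ has leading term $jh t^{j-1}$, comparing degrees forces $h\mid ch'$. For $\deg h\ge1$ this is impossible unless $c=0$, because $\deg h'<\deg h$. Hence $\partial(x)=0$ and $[\partial(t),x]=0$, so $\partial(t)$ lies in the centralizer of $x$. That centralizer is $\Bbbk[x]$, since $[t^j,x]\ne0$ has leading term $jht^{j-1}$. Therefore $\partial=D_{g(x)}$ with $g=\partial(t)$.

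Finally, for $\ml(A_h)$ I would intersect the kernels of the $D_g$. Each of them contains $\Bbbk[x]$. Moreover, $\ker D_1=\Bbbk[x]$, because $D_1(x^it^j)=jx^it^{j-1}$. Hence $\ml(A_h)=\Bbbk[x]$.
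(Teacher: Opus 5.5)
The paper gives no proof of this proposition; it quotes it from Kaygorodov, Lopes and Mashurov. I therefore judged your argument on its own terms.

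Much of it is correct. $\deg_t$ is additive on products, so $\partial$ raises $t$-degree by at most $N=\max(d,e)$ and induces a nonzero homogeneous LND $\bar\partial$ of $\operatorname{gr}A_h=\Bbbk[x,t]$. The check that each $D_{g(x)}$ is a locally nilpotent derivation is fine. So is the passage from $\partial(x)=c\in\Bbbk$ to $\partial=D_{\partial(t)}$, and the computation $\ml(A_h)=\Bbbk[x]$ via $\ker D_1$.

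\textbf{The gap.} It sits exactly at the step you call key: showing $\deg_t\partial(x)\le 0$. You announce that you would show the generator $f$ of $\ker\bar\partial$ is $x$, but give no argument. The "fallback" does not replace this, because it begins from $\partial(x)\in\Bbbk[x]$, which is what had to be proved. As written, nothing excludes $\partial(x)$ of positive $t$-degree.

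Two smaller points about your displayed identity:
\begin{itemize}
\item It holds only modulo lower $t$-degree. In general $\partial(h(x))\neq h'(x)\partial(x)$, since $\partial(x)$ need not commute with $x$.
\item The $\partial(t)$-contribution has sign $-mhb_m$, not $+mhb_m$.
\end{itemize}
Neither matters for leading-coefficient comparisons.

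\textbf{How to close it.} The missing step is short and uses only what you set up. Write $\partial(x)=\sum_j a_j(x)t^j$. A homogeneous derivation of degree $N$ for the grading $\deg x=0$, $\deg t=1$ has $\bar\partial(x)=\alpha(x)t^N$ and $\bar\partial(t)=\beta(x)t^{N+1}$. There are two cases.

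\emph{Case $N=-1$.} Then $\partial(x)=0$ and $\partial(t)\in\Bbbk[x]$ directly, so $\partial=D_{\partial(t)}$.

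\emph{Case $N\ge 0$.}
\begin{itemize}
\item Since $t$ divides $\bar\partial(t)$, and an LND of a commutative domain kills any element dividing its own image, we get $\bar\partial(t)=0$.
\item Then $\alpha(x)t^N\partial_x$ is locally nilpotent only if $\alpha=c\in\Bbbk^*$. Equivalently, by Rentschler, the graded kernel is $\Bbbk[t]$ rather than $\Bbbk[x]$.
\item Hence $\partial(x)=ct^N+(\text{lower})$ and $\deg_t\partial(t)\le N$.
\item Compare $t^N$-coefficients in $[\partial(t),x]+[t,\partial(x)]=\partial(h(x))$:
  \begin{itemize}
  \item $[\partial(t),x]$ has $t$-degree at most $N-1$;
  \item $[t,\partial(x)]=\sum_j h\,a_j'\,t^j$ exactly, and its $t^N$-coefficient is $h\cdot c'=0$;
  \item the $t^N$-coefficient of $\partial(h(x))$ is $c\,h'(x)\neq 0$, because $\deg h\ge1$ and the characteristic is $0$.
  \end{itemize}
  This is a contradiction, so this case cannot occur.
\end{itemize}

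So only the first case survives, and it already gives $\LND(A_h)\subseteq\{D_{g(x)}\}$. Your later steps (restricting to $\Bbbk[x]$, the divisibility $h\mid ch'$, and the centralizer of $x$) then become unnecessary, though they are correct.
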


We also recall two results due to Nowicki \cite{Nowicki}, which characterize the derivations of $A_h$ both in the square-free case, that is, when $\gcd(h,h')=1$, and in the non-square-free case.

\begin{theorem}[{\cite[Theorem 10.1]{Nowicki}}] \label{squarefree}
Consider the algebra $A_h$ with $\deg(h)\geq 1$. Assume that $\gcd(h,h')=1$. Then every derivation $D$ of $A_h$ has a unique decomposition
\begin{equation}\label{eq_Dmdc1}
    D=ad_w+\Delta_{s(x)},
\end{equation}
where $ad_w$ is an inner derivation of $A_h$, and  $\Delta_{s(x)}$ is the derivation of $A_h$ given by $\Delta_{s(x)} (x)=0$ and $\Delta_{s(x)} (t)=s(x)$, for $s(x)\in \Bbbk[x]$ and $\deg(s)<\deg(h)$.
Moreover, $D$ is inner if and only if $s(x)=0$.
\end{theorem}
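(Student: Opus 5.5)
The plan is to normalize an arbitrary derivation $D$ of $A_h$ by subtracting inner derivations until $D(x)=0$, and then to analyse what freedom remains in $D(t)$. I will use the PBW basis $\{x^it^j\}$, where every element is written as $\sum_j a_j(x)t^j$, together with the $t$-degree filtration. The basic computations are
\[
[t,a(x)]=h\,a', \qquad [t^j,x]=j\,h\,t^{j-1}+(\text{lower } t\text{-degree}),
\]
so that
\[
\ad_{c(x)t^{n+1}}(x)=(n+1)\,c\,h\,t^n+(\text{lower}).
\]

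\textbf{Step 1: a divisibility condition from the defining relation.} Write $D(x)=\sum_{i\le n}a_it^i$ with $a_n\neq0$, and $D(t)=\sum_j b_jt^j$. Apply $D$ to the relation $tx-xt=h(x)$, which gives
\[
[D(t),x]+[t,D(x)]=D(h(x)),
\]
and compare the coefficients of $t^n$. Modulo lower terms, $D(x^k)=\sum_i x^iD(x)x^{k-1-i}$ has leading coefficient $kx^{k-1}a_n$, so $D(h)$ has leading coefficient $h'a_n$. The term $[t,a_nt^n]$ contributes $h a_n'$. The only other contribution in $t$-degree $n$ is $(n+1)b_{n+1}h$. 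Higher coefficients of $D(t)$ would produce terms of $t$-degree above $n$ that nothing else can cancel, so one first checks that $\deg_t D(t)\le n+1$; this is a similar top-degree comparison. Hence
\[
h \mid h'a_n-ha_n', \quad\text{and therefore}\quad h\mid h'a_n .
\]
Because $\gcd(h,h')=1$, we obtain $h\mid a_n$. This is the one place where square-freeness is used.

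\textbf{Step 2: reduce $D(x)$ to zero.} Write $a_n=c\,h$ and replace $D$ by $D-\tfrac{1}{n+1}\ad_{c\,t^{n+1}}$. This strictly lowers $\deg_t D(x)$, and the case $n=0$ is included. By induction on $\deg_t D(x)$, after subtracting an inner derivation we may assume $D(x)=0$.

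\textbf{Step 3: analyse $D(t)$.} With $D(x)=0$ the relation reduces to $[D(t),x]=0$. The centralizer of $x$ in $A_h$ is $\Bbbk[x]$, since $h\neq0$ and the leading term of $[\sum b_jt^j,x]$ is $j\,b_j\,h\,t^{j-1}$. Hence $D(t)=b(x)\in\Bbbk[x]$, that is, $D=\Delta_{b}$.

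The inner derivations that kill $x$ are exactly the $\ad_w$ with $w\in\Bbbk[x]$, and these satisfy $\ad_w(t)=-h\,w'$. Since $\{w'\mid w\in\Bbbk[x]\}=\Bbbk[x]$, we may subtract $\ad_w$ to replace $b$ by its remainder $s$ modulo $h$. This gives the decomposition $D=\ad_w+\Delta_s$ with $\deg s<\deg h$.

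\textbf{Step 4: uniqueness and the inner criterion.} Suppose $\Delta_s$ is inner with $\deg s<\deg h$, say $\Delta_s=\ad_w$. Then $[w,x]=0$, so $w\in\Bbbk[x]$ and $s=-hw'$. The degree bound forces $s=0$. This proves uniqueness of the decomposition and shows that $D$ is inner if and only if $s=0$.

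I expect the main obstacle to be the bookkeeping in Step 1: justifying the degree bound on $D(t)$, and verifying that no cross terms of $D(h(x))$ or $[D(t),x]$ other than those listed contribute to the $t^n$ coefficient.
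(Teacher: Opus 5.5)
Your proof is correct. The paper itself gives no proof of this statement: it quotes it from Nowicki \cite[Theorem 10.1]{Nowicki}, whose machinery (localization at $\psi=\gcd(h,h')$ and the derivations $E_H$, \cite[Prop.~2.1 and 8.2]{Nowicki}) treats the square-free and non-square-free cases in one framework. You instead give a self-contained leading-$t$-degree argument, the same technique the paper later uses in Theorems~\ref{clas-square} and~\ref{clas-nonsquare}.

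The bookkeeping you were worried about in Step 1 causes no trouble. If $m=\deg_t D(t)\ge 1$, then $[D(t),x]$ has exact $t$-degree $m-1$ with leading coefficient $m\,b_m h\neq 0$, while $D(h)-[t,D(x)]$ has $t$-degree at most $n$, so $m\le n+1$. Comparing $t^n$-coefficients then gives $(n+1)b_{n+1}h=h'a_n-ha_n'$, and Steps 2--4 go through as written.

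Besides self-containment, your route shows exactly where square-freeness enters. In general $h\mid h'a_n$ only gives $g=h/\psi\mid a_n$, say $a_n=cg$. The top term of $D(x)$ can then only be removed by $\frac{1}{n+1}\ad_{(c/\psi)t^{n+1}}$, computed in $A_h[\psi^{-1}]$. Writing $c=\psi q+r$ with $\deg r<\deg\psi$, the non-inner leftover is $\frac{1}{n+1}E_{rt^{n+1}}$. This is precisely how the special polynomials of Theorem~\ref{Nowicki-nonsquarefree} arise, so the same induction essentially yields the existence part of that theorem as well.
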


\begin{theorem}[{\cite[Theorem 11.1]{Nowicki}}]\label{Nowicki-nonsquarefree}
Let $h\in \Bbbk[x]$, $\deg h\ge 2$, and $\psi=\gcd(h,h')$.
Then every derivation $D\in\Der(A_h)$ can be uniquely expressed as
\[
D=ad_w+E_H+\Delta_{s(x)},
\]
where $w\in A_h$, $s(x)\in \Bbbk[x]$ with $\deg s<\deg h$,
and $H$ is a special polynomial. Moreover, $D$ is inner if and only if $H=0$ and $s(x)=0$.
\end{theorem}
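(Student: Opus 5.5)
The plan is to work with the $t$-degree filtration on $A_h=\bigoplus_{i}\Bbbk[x]t^i$ and to reduce an arbitrary $D\in\Der(A_h)$ to a normal form by subtracting inner derivations. Two facts drive everything.

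First, $[x,ft^n]=-nfh\,t^{n-1}+(\text{lower terms in }t)$ for $f\in\Bbbk[x]$. Hence $\ad_x$ lowers $t$-degree by exactly one, and its leading coefficient is multiplication by $-nh$.

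Second, the elementary inner derivations act as
\[
\ad_{g(x)t}(x)=gh,\qquad \ad_{g(x)t}(t)=0,\qquad \ad_{f(x)}(x)=0,\qquad \ad_{f(x)}(t)=-hf'.
\]

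A derivation is determined by the pair $(D(x),D(t))$. The only constraint on this pair comes from applying $D$ to the defining relation:
\[
[D(t),x]+[t,D(x)]=D(h(x)).\tag{$\ast$}
\]

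\textbf{Step 1: reduce to $D(x)\in\Bbbk[x]$.} Write $D(x)=\sum_{i\le n}a_i(x)t^i$ and compare top $t$-degree terms in $(\ast)$. The aim is to show that the leading coefficient $a_n$ (for $n\ge 1$) is forced to be divisible by $h$, or at least to be of the form produced by $\ad_{ct^{n+1}}$-type inner derivations. If so, one can subtract $\ad_w$ with $w=c(x)t^{n+1}+\cdots$ to lower $n$, and induction yields $D(x)=p(x)\in\Bbbk[x]$. This is the step I expect to be the main obstacle. The leading-term bookkeeping in $(\ast)$ involves $D(h(x))$, which is a sum of products containing $D(x)$ in every position. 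I would first pass to the associated graded algebra, which is commutative ($\Bbbk[x,t]$), where $(\ast)$ becomes a Poisson-type identity involving the bracket $\{t,x\}=h$. I would then read off the divisibility condition there.

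\textbf{Step 2: reduce to $\deg_t D(t)\le 1$ and identify the condition.} Once $D(x)=p(x)$, we have $[t,p]=hp'$ and $D(h)=h'p$. So $(\ast)$ becomes
\[
[x,D(t)]=hp'-h'p.
\]
The left side has $t$-degree equal to $\deg_t D(t)-1$, while the right side lies in $\Bbbk[x]$. By the first fact above, this forces $D(t)=b_0(x)+b_1(x)t$, and then $-b_1h=hp'-h'p$. Consequently $h\mid h'p$. Writing $\psi=\gcd(h,h')$, this is equivalent to $(h/\psi)\mid p$. Conversely, each such $p$ (with $b_1=(h'p-hp')/h$) defines a derivation.

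\textbf{Step 3: quotient by inner derivations and prove uniqueness.} The inner derivations $\ad_{g t}$ adjust $p$ by arbitrary multiples of $h$. Therefore the class of $p$ is represented uniquely by $p=(h/\psi)\,r$ with $\deg r<\deg\psi$; this is the special polynomial $H$, and the corresponding derivation is $E_H$. What remains has $D(x)=0$ and $D(t)=b_0(x)$. Since $\ad_{f(x)}(t)=-hf'$ and $f\mapsto hf'$ surjects onto $h\Bbbk[x]$ in characteristic zero, $b_0$ can be reduced modulo $h$ to a unique $s$ with $\deg s<\deg h$, giving $\Delta_s$.

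For uniqueness, and for the criterion that $D$ is inner exactly when $H=0$ and $s=0$, I would check the converse direction: an inner $\ad_w$ with $\ad_w(x)\in\Bbbk[x]$ must have $w\in\Bbbk[x]+\Bbbk[x]t$ up to center. Its values then land in $h\Bbbk[x]$ for $x$, and in $h\Bbbk[x]$ for $t$ once the $x$-component vanishes. This is incompatible with nonzero reduced $H$ or $s$.
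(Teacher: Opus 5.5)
\textbf{The gap is in Step 1.} Step 1 is not merely the hard step; as stated it is false, and it is exactly where the special polynomials of $t$-degree $\ge 2$ come from. (The paper itself gives no proof of this statement; it quotes Nowicki's Theorem 11.1.)

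Let $n=\deg_t D(x)\ge 1$ and let $a_n$ be the leading coefficient of $D(x)$. Comparing $t^n$-coefficients in $(\ast)$ gives
\[
(n+1)\,b_{n+1}\,h+h\,a_n'=h'\,a_n .
\]
This is the same computation as your Step 2. It yields only $g\mid a_n$ with $g=h/\psi$, not $h\mid a_n$. An inner derivation $\ad_{ct^{n+1}}$ contributes the leading coefficient $(n+1)ch$. So if you write $a_n=hc+gr$ with $\deg r<\deg\psi$, you can remove $hc$, but not the residue $gr$.

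Here is a concrete counterexample. Take $h=x^2$, so $\psi=g=x$ and $H=t^2$ is special. Then $E_{t^2}(x)=x^{-1}[t^2,x]=2xt+2x^2$ and $E_{t^2}(t)=t^2$.
\begin{itemize}
\item If $\deg_t w\le 2$, the $t$-coefficient of $[w,x]$ is $2w_2x^2\neq 2x$.
\item If $\deg_t w\ge 3$, then $[w,x]$ has $t$-degree $\ge 2$.
\end{itemize}
Hence $E_{t^2}(x)-\ad_w(x)\notin\Bbbk[x]$ for every $w\in A_h$. Your procedure can therefore only produce decompositions $\ad_w+E_{rt}+\Delta_s$. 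By the uniqueness in the statement, $E_{t^2}$ is not of that form. The normal form you would prove is strictly smaller than the theorem's.

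\textbf{The missing idea} is to subtract an $E$-term at every level of the descending induction on $\deg_t D(x)$, not only at the bottom.
\begin{itemize}
\item Since $th=h(t+h')$, the commutator $[t^{n+1},x]$ lies in $hA_h$ with leading term $(n+1)ht^n$.
\item Hence $E_{rt^{n+1}}(x)=\psi^{-1}r\,[t^{n+1},x]=(n+1)gr\,t^n+(\text{lower terms})$ lies in $A_h$.
\item $E_{rt^{n+1}}$ is a derivation of $A_h$; for its value on $t$ you use $\psi\mid g\psi'$.
\item Subtracting $\ad_{ct^{n+1}/(n+1)}+E_{rt^{n+1}/(n+1)}$ lowers $\deg_t D(x)$.
\item Iterating down to $D(x)=0$ accumulates a special $H=\sum_{i\ge 1}\frac{r_{i-1}}{i}t^i$.
\item Then $[D(t),x]=0$ gives $D(t)\in\Bbbk[x]$, and your reduction modulo $h$ via $\ad_{f(x)}$ produces $\Delta_s$.
\end{itemize}
Your Steps 2--3 are precisely the case $n=0$ of this induction.

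\textbf{Uniqueness} must also be redone for general $H$.
\begin{itemize}
\item If $\ad_w+E_H+\Delta_s=0$, then $w+\psi^{-1}H$ commutes with $x$ in $\mathcal B=\Bbbk[x,\psi^{-1}][t;h\partial_x]$, so it lies in $\Bbbk[x,\psi^{-1}]$.
\item Since $\deg h_i<\deg\psi$, each $w_i+h_i/\psi$ with $i\ge 1$ vanishes only if $w_i=h_i=0$.
\item Then $s=hw_0'$ with $\deg s<\deg h$ forces $s=0$.
\end{itemize}
Your argument assumes $\ad_w(x)\in\Bbbk[x]$, so it does not cover this case.

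\textbf{A minor slip:} $\ad_{g(x)t}(t)=-hg'\,t$, not $0$. This is harmless in your Step 3, since $b_1$ is determined by $p$.
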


\subsection{Properties of $A_h$}

In this section we state some relations in the algebra $A_h$ which will be relevant for our purposes:

\begin{lemma}\label{lematx^n} For any $n \geq 0$,
\begin{equation}\label{lematx^n_eq}
tx^n=x^nt+(x^n)'h(x)=x^nt+nx^{n-1} h(x).
\end{equation}

\end{lemma}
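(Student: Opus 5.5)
The plan is to prove the identity by induction on $n$, using only the defining relation $tx = xt + h(x)$ of $A_h$ and the fact that $h(x)$ commutes with every power of $x$. For $n=0$ both sides equal $t$, with the convention that the term $n x^{n-1}h(x)$ vanishes when $n=0$. For $n=1$ the statement is exactly the defining relation $tx=xt+h(x)$.

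For the inductive step, I will assume $tx^n = x^n t + n x^{n-1}h(x)$ and compute
\[
tx^{n+1} = (tx^n)x = x^n t x + n x^{n-1}h(x)x = x^n\bigl(xt+h(x)\bigr) + n x^{n}h(x),
\]
where the last step uses the defining relation once more and the commutativity of $\Bbbk[x]$. This simplifies to $x^{n+1}t + (n+1)x^n h(x)$. Since $(x^{n+1})' = (n+1)x^n$, this is precisely the claimed formula for $n+1$.

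As an alternative formulation, one could note that $\ad_t$ restricted to $\Bbbk[x]$ is a derivation of $\Bbbk[x]$. It agrees with $h(x)\partial_x$ on the generator $x$, so it coincides with $h(x)\partial_x$ on all of $\Bbbk[x]$. This yields $[t,p(x)] = p'(x)h(x)$ for every $p\in\Bbbk[x]$, and the lemma is the special case $p=x^n$.

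There is no real obstacle here. The only point requiring care is the bookkeeping that $h(x)$ commutes with powers of $x$, which holds because all of these elements lie in the commutative subalgebra $\Bbbk[x]$.
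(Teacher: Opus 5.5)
Your proof is correct and is the paper's argument: induction on $n$, the same base cases, and the same computation $tx^{n+1}=(x^nt+nx^{n-1}h(x))x=x^n(xt+h(x))+nx^nh(x)=x^{n+1}t+(n+1)x^nh(x)$. Your alternative via $\ad_t$ is also valid if you read $\ad_t|_{\Bbbk[x]}$ as a derivation $\Bbbk[x]\to A_h$, since two such derivations that agree on $x$ agree on all of $\Bbbk[x]$; that its values lie in $\Bbbk[x]$ then follows as a consequence rather than being an assumption.
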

\begin{proof}
    The proof is done by induction on $n$. The base cases $n=0$ and $n=1$ are clear.
    Assume the result is valid for $n\geq 0$, then
    \begin{eqnarray*}
    tx^{n+1}&=& tx^nx\\
    &=& (x^nt+nx^{n-1} h(x))x\\
    &=& x^ntx+nx^{n-1} h(x)x\\
    &=& x^n(xt+h(x)) + nx^nh(x)\\
    &=& x^{n+1}t + x^nh(x) + nx^nh(x)\\
    &=& x^{n+1}t + (n+1)x^{n}h(x),
    \end{eqnarray*}
which concludes the proof.
\end{proof}

As a consequence of this Lemma, it is possible to compute the product of $t$ with any polynomial of $\Bbbk[x]$:
\begin{equation}\label{comm_t_fx_eq}
t f(x) = f(x) t + f'(x)h(x).
\end{equation}

In general, the product of $t$ with any element of $A_h$ is more complicated.
However, we can handle the degree in $t$ as follows: recall that for $g \in A_h$, $g \neq 0$, there is a unique decomposition $g = \sum_{i=0}^n g_i(x)t^i$, where $g_i \in \Bbbk[x]$ and $g_n\neq 0$, allowing to set $\deg_t(g):=n$.
Note that $\deg_t(g)=0$ if, and only if, $g \in \Bbbk[x]$. It is also convenient to define $\deg_t(0)=-\infty$.

\begin{corollary}\label{comm_t_g}
    Let $g \in A_h$. Then $t g = g t + h(x)\tilde{g}$, where $\deg_t(\tilde{g}) \leq \deg_t(g)$.
    In particular, $\deg_t( t g ) = \deg_t (g) + 1$, and $\deg_t(\tilde{g})= \deg_t(g)$ if and only if $g = \sum_{i=0}^n g_i(x) t^i$ and $g_n(x) \in \Bbbk[x] \backslash \Bbbk$.

\end{corollary}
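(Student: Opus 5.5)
We need to prove Corollary \ref{comm_t_g}: for $g\in A_h$, $tg=gt+h(x)\tilde g$ with $\deg_t(\tilde g)\le\deg_t(g)$; in particular $\deg_t(tg)=\deg_t(g)+1$; and equality $\deg_t(\tilde g)=\deg_t(g)$ holds exactly when the leading coefficient $g_n$ is nonconstant.

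The plan is to compute $tg$ directly from the normal form of $g$. Write $g=\sum_{i=0}^n g_i(x)t^i$. By linearity and \eqref{comm_t_fx_eq},
\[
tg=\sum_{i=0}^n t\,g_i(x)t^i=\sum_{i=0}^n\bigl(g_i(x)t+g_i'(x)h(x)\bigr)t^i=gt+h(x)\sum_{i=0}^n g_i'(x)t^i .
\]
So we set $\tilde g:=\sum_{i=0}^n g_i'(x)t^i$. Here $h(x)$ multiplies on the left, which is legitimate because $g_i'(x)h(x)=h(x)g_i'(x)$ in $\Bbbk[x]$. This expression is already in the PBW normal form $\sum p_i(x)t^i$, so reading off degrees is immediate: $\deg_t(\tilde g)\le n$.

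Equality $\deg_t(\tilde g)=n$ holds iff $g_n'\neq0$, that is, iff $g_n\in\Bbbk[x]\setminus\Bbbk$; otherwise the degree drops, possibly to $-\infty$ when $\tilde g=0$.

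For $\deg_t(tg)$, the term $gt=\sum_i g_i(x)t^{i+1}$ has leading coefficient $g_n\neq0$ in degree $n+1$. Meanwhile $h(x)\tilde g=\sum_i h(x)g_i'(x)t^i$ has $t$-degree at most $n$. Hence the coefficient of $t^{n+1}$ in $tg$ is $g_n\neq0$, and $\deg_t(tg)=n+1$. Uniqueness of the normal form, that is, the fact that $\{x^it^j\}$ is a basis of the Ore extension, justifies comparing coefficients.

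There is no real obstacle here. The only points needing care are that left multiplication by $h(x)$ keeps the normal form, and the convention $\deg_t(0)=-\infty$ for $g\in\Bbbk$.
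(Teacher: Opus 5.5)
Your proof is correct and matches the paper's. Like the paper, you expand $g=\sum_i g_i(x)t^i$, apply \eqref{comm_t_fx_eq} termwise to get $\tilde g=\sum_i g_i'(x)t^i$, and use that in characteristic zero $g_n'\neq 0$ exactly when $g_n\notin\Bbbk$. You also check that the coefficient of $t^{n+1}$ in $tg$ is $g_n$, which proves $\deg_t(tg)=\deg_t(g)+1$; the paper leaves that step implicit.
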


\begin{proof}
    Write $g = \sum_{i=0}^n g_i(x)t^i$.
    Then,
    \begin{eqnarray*}
        t g & = & t \left( \sum_{i=0}^n g_i(x)t^i \right) \\
        & = & \sum_{i=0}^n t g_i(x) t^i \\
        & \stackrel{\eqref{comm_t_fx_eq}}{=} & \sum_{i=0}^n (g_i(x) t + g_i'(x) h(x)) t^i \\
        & = & \sum_{i=0}^n g_i(x)t^{i+1} + \sum_{i=0}^n h(x)g_i'(x)t^i \\
        & = & \left( \sum_{i=0}^n g_i(x)t^i \right) t + h(x) \left( \sum_{i=0}^n g_i'(x)t^i \right) \\
        & = &  gt + h(x) \tilde{g},
    \end{eqnarray*}
    where $\tilde{g}= \sum_{i=0}^n g_i'(x)t^i$. Since $\Bbbk$ has characteristic zero, $g_n\neq 0$ and $g_n\notin\Bbbk$ together are equivalent to $g_n'\neq0$, which gives the stated equivalence.
\end{proof}

\medbreak

Next, we also use the standard commutator notation for an algebra $R$:  for $a,b \in R$, we denote $[a,b]:=ab-ba$. Further, given $a\in R$, we denote the inner derivation associated to $a$ by $ad_a : R \to R$, which is defined by $ad_a(b)=ab-ba = [a,b]$.

\begin{lemma}\label{lematix}
For every $i\ge 1$, there exists $u_i\in A_h$ such that
\[
[t^i,x]=i\,h(x)\,t^{i-1}+h(x)u_i
\quad\text{and}\quad
\deg_t(u_i)<i-1.
\]
\end{lemma}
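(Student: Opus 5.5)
We argue by induction on $i$, using the identity $[t^{i+1},x]=t[t^i,x]+[t,x]t^i$ together with $[t,x]=h(x)$ and Corollary \ref{comm_t_g} to control $t$-degrees. For $i=1$ we have $[t,x]=tx-xt=h(x)$, so the statement holds with $u_1=0$ (with the convention $\deg_t(0)=-\infty<0$).

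For the inductive step, assume $[t^i,x]=i\,h(x)t^{i-1}+h(x)u_i$ with $\deg_t(u_i)<i-1$. Then
\[
[t^{i+1},x]=t[t^i,x]+[t,x]t^i=t\bigl(i\,h(x)t^{i-1}+h(x)u_i\bigr)+h(x)t^i .
\]
By \eqref{comm_t_fx_eq}, $t\,h(x)=h(x)t+h'(x)h(x)$, so $t\,i\,h(x)t^{i-1}=i\,h(x)t^{i}+i\,h(x)h'(x)t^{i-1}$. For the remaining term, again by \eqref{comm_t_fx_eq}, $t\,h(x)u_i=h(x)\,t u_i+h'(x)h(x)u_i=h(x)\bigl(tu_i+h'(x)u_i\bigr)$. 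Collecting,
\[
[t^{i+1},x]=(i+1)h(x)t^{i}+h(x)u_{i+1},\qquad u_{i+1}:=i\,h'(x)t^{i-1}+t u_i+h'(x)u_i .
\]

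It remains to check $\deg_t(u_{i+1})<i$. Clearly $\deg_t(i\,h'(x)t^{i-1})\le i-1$ and $\deg_t(h'(x)u_i)\le\deg_t(u_i)<i-1$. By Corollary \ref{comm_t_g}, $\deg_t(tu_i)=\deg_t(u_i)+1<i$ (or $tu_i=0$ if $u_i=0$). Hence all three summands have $t$-degree at most $i-1$, giving $\deg_t(u_{i+1})\le i-1<i$. The only point requiring care is moving $h(x)$ to the left of $t$, which introduces the extra term $h'(x)h(x)$; since it still carries a left factor $h(x)$ and has lower $t$-degree, it can be absorbed into $u_{i+1}$. This completes the induction.
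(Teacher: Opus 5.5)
Your proof is correct and follows the same induction as the paper, using $[t^{i+1},x]=t[t^i,x]+[t,x]t^i$ together with $t\,h(x)=h(x)\bigl(t+h'(x)\bigr)$. The only difference is that you write out $u_{i+1}=i\,h'(x)t^{i-1}+tu_i+h'(x)u_i$ explicitly and check the bound $\deg_t(u_{i+1})<i$, whereas the paper leaves that verification implicit.
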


\begin{proof}
For $i=1$ is immediate, since $[t,x]=h(x)$.
Assume that $[t^i,x]=i\,h(x)\,t^{i-1}+h(x)u_i$ for some $u_i\in A_h$ with $\deg_t(u_i)<i-1$. Using
\[
[t^{i+1},x]=t[t^i,x]+[t,x]t^i
\]
and, $th(x)=h(x)(t+h'(x))$, by \ref{comm_t_fx_eq}, we obtain
\[
[t^{i+1},x]
=t\bigl(i\,h(x)\,t^{i-1}+h(x)u_i\bigr)+h(x)t^i
=(i+1)h(x)t^i+h(x)u_{i+1},
\]
for some $u_{i+1}\in A_h$ with $\deg_t(u_{i+1})<i$. This concludes the induction.
\end{proof}

\medbreak

\subsection{The square-free case}

\begin{lemma}\label{onevariable}
Let $p(x)\in \Bbbk[x]$. Then, the derivation $p(x)\partial_x\in \Der(\Bbbk[x])$
is locally finite if and only if $\deg p\le 1$.
\end{lemma}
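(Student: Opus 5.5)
The plan is to prove the two implications separately. Throughout we use that $p(x)\partial_x$ sends $x^n$ to $n x^{n-1}p(x)$, so its effect on degrees is governed by $\deg p$.

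\emph{Sufficiency.} Suppose $\deg p\le 1$ and write $p(x)=a+bx$. For each $n\ge 0$ let $\Bbbk[x]_{\le n}$ denote the polynomials of degree at most $n$. The derivation $p(x)\partial_x$ maps $x^k$ to $k a x^{k-1}+k b x^k$, so it maps $\Bbbk[x]_{\le n}$ into itself. Any $f\in\Bbbk[x]$ lies in $\Bbbk[x]_{\le \deg f}$, which is finite-dimensional and stable under $p(x)\partial_x$. Hence $\operatorname{Span}_{\Bbbk}\{(p\partial_x)^m(f)\mid m\ge0\}$ is finite-dimensional, and $p(x)\partial_x$ is locally finite. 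The case $p=0$ is trivially included.

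\emph{Necessity.} Suppose $\deg p=d\ge 2$ and let $c\neq0$ be the leading coefficient of $p$. For $n\ge1$ we have
\[
p(x)\partial_x(x^n)=n x^{n-1}p(x),
\]
whose degree is $n+d-1>n$ and whose leading coefficient is $nc\neq0$. By induction on $m$, $(p\partial_x)^m(x)$ has degree $1+m(d-1)$ and leading coefficient $c^m\prod_{k=0}^{m-1}\bigl(1+k(d-1)\bigr)$. Since $\operatorname{char}\Bbbk=0$, each factor $1+k(d-1)$ is a positive integer and hence nonzero in $\Bbbk$, so this leading coefficient is nonzero. The degrees $1+m(d-1)$ are pairwise distinct because $d-1\ge1$. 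Therefore the elements $(p\partial_x)^m(x)$, $m\ge0$, are linearly independent, and $p(x)\partial_x$ is not locally finite.

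The only point requiring care is ensuring that the leading coefficient never vanishes. This is exactly where characteristic zero enters: it guarantees that the integer factors $1+k(d-1)$ are nonzero in $\Bbbk$. The argument mirrors the one used in the Example following Theorem~\ref{LFquantum}.
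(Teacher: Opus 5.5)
Your proof is correct. The paper gives no argument of its own for this lemma; it records it as an elementary fact and cites van den Essen \cite[Example 1.2]{V92}.

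Your proposal instead supplies a self-contained verification. For $\deg p\le 1$ you use the finite-dimensional subspaces $\Bbbk[x]_{\le n}$, which are stable under $p(x)\partial_x$ and exhaust $\Bbbk[x]$. For $\deg p=d\ge 2$ you show that the iterates of $x$ have strictly increasing degrees $1+m(d-1)$. Characteristic zero guarantees that their leading coefficients $c^m\prod_{k=0}^{m-1}(1+k(d-1))$ never vanish.

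The citation buys brevity. Your argument makes the paper self-contained at this point, using the same degree-growth technique that appears in the Example after Theorem~\ref{LFquantum} and in the proofs of Theorems~\ref{clas-square} and~\ref{clas-nonsquare}.

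One small point of phrasing: the induction step needs the fact for an arbitrary $f$ of degree $n\ge 1$ with leading coefficient $\ell$, not just for the monomial $x^n$. Namely, $p\,\partial_x(f)=pf'$ has degree $n+d-1$ and leading coefficient $n\ell c$. This follows at once from your monomial computation by linearity, since lower-degree terms of $f$ contribute only lower-degree terms, but it is worth stating explicitly.
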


\begin{proof} The following elementary fact is also recorded in \cite[Example 1.2]{V92}.
\end{proof}

The following result gives a complete description of the locally finite derivations of $A_h$ in the square-free case. It exhibits two distinct situations. If $\deg h\geq2$, every locally finite derivation is locally nilpotent. If $\deg h=1$, an additional term of the form
$\lambda\ad_t$ may occur.

\begin{theorem}\label{clas-square}
Assume that $h\in \Bbbk[x]\setminus \Bbbk$ is square-free.

\begin{enumerate}
\item If $\deg h\ge 2$, then a derivation $D\in \Der(A_h)$ is locally finite if and only if
\[
D=D_{g(x)}
\]
for some $g(x)\in \Bbbk[x]$. In particular, a derivation of $A_h$ is locally finite if and only if it is locally nilpotent.

\item If $\deg h=1$, then a derivation $D\in \Der(A_h)$ is locally finite if and only if
\[
D=\lambda \ad_t+D_{g(x)}
\]
for some $\lambda\in \Bbbk$ and $g(x)\in \Bbbk[x]$.
\end{enumerate}
\end{theorem}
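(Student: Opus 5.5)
The plan is to use Nowicki's decomposition (Theorem~\ref{squarefree}) to write $D=\ad_w+\Delta_{s(x)}$, and then show that local finiteness forces $\deg_t(w)\le 1$. Once that is known, the restriction of $D$ to $\Bbbk[x]$ is a derivation of $\Bbbk[x]$, and Lemma~\ref{onevariable} finishes the argument.

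\textbf{Step 1: bounding $\deg_t(w)$.} Write $w=\sum_{i=0}^n w_i(x)t^i$ with $w_n\ne 0$. By Lemma~\ref{lematix},
\[
D(x)=[w,x]=n\,w_n(x)h(x)\,t^{n-1}+(\text{terms of smaller $t$-degree}).
\]
Suppose $n\ge 2$. I want to show that the iterates $D^k(x)$ have unbounded leading degree, so that they span an infinite-dimensional space. The tool is the weight grading with $x$ of weight $1$ and $t$ of weight $\deg h-1\ge 1$, where $\deg h\ge 2$ (the case $\deg h=1$ is treated below with the weight of $t$ set to $0$). With this grading the relation $tx-xt=h$ has leading part homogeneous, so the associated graded algebra is $\Bbbk[x,t]$ with the Poisson bracket $\{t,x\}=h_{\mathrm{top}}$, and this bracket is homogeneous. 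The leading form of $\ad_w(a)$ is then $\{W,\bar a\}$, where $W$ and $\bar a$ are the leading forms of $w$ and $a$, whenever this bracket is nonzero. The $\Delta_s$ term only contributes elements of $\Bbbk[x]$ of degree $<\deg h$, so I expect it not to affect the top terms once $n\ge 2$. The bracket $\{W,\cdot\}$ raises weight by $\operatorname{wt}(W)-1+\deg h-\dots$, which is positive when $n\ge2$. I then need to check that the iterated brackets $\{W,\{W,\dots\{W,x\}\}\}$ never vanish.

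\textbf{Main obstacle.} The hard part is exactly this nonvanishing. The bracket $\{W,F\}$ vanishes only when $F$ is Poisson-dependent on $W$, essentially when $F$ is a function of $W$. Since $D(x)$ has leading form a multiple of $h\,\partial_t W$, of smaller $t$-degree than $W$, this should be ruled out by a $t$-degree comparison. If this route becomes messy, my fallback is to track the $t$-degree directly: for an element $f t^m$ with $m<n$, the leading coefficient of $[w_nt^n,ft^m]$ in degree $t^{n+m-1}$ is $h\,(n w_n' f-m w_n f')$ up to sign, and one must show this is nonzero along the orbit.

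\textbf{Step 2: the case $n\le 1$.} Write $w=a(x)t+b(x)$; the center of $A_h$ is $\Bbbk$, so $w$ is defined only modulo scalars, which does not matter. A direct computation using \eqref{comm_t_fx_eq} gives
\[
D(x)=a(x)h(x),\qquad D(t)=-a'(x)h(x)\,t-b'(x)h(x)+s(x).
\]
Hence $D$ preserves $\Bbbk[x]$ and acts there as $a h\,\partial_x$. By Lemma~\ref{onevariable}, local finiteness forces $\deg(ah)\le1$.
\begin{itemize}
\item If $\deg h\ge2$, then $a=0$, so $D=D_{g}$ with $g=s-b'h$. These derivations are locally nilpotent by Proposition~\ref{prop_localnilp}, which proves part (1) of the theorem.
\item If $\deg h=1$, then $a=\lambda\in\Bbbk$, so $D=\lambda\ad_t+D_g$.
\end{itemize}
In the case $\deg h=1$, Step 1 must also be rerun with $t$ of weight $0$, i.e.\ graded only by $t$-degree. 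The formula $[w_nt^n,ft^m]$ above again shows that the $t$-degree grows when $n\ge2$.

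\textbf{Converse for $\deg h=1$.} Let $D=\lambda\ad_t+D_g$ with $m=\deg g$. Then $D(x)=\lambda h$ is linear and $D(t)=g(x)$. Consider the finite-dimensional subspaces
\[
V_{N,j}=\operatorname{Span}\{x^it^k\mid k\le j,\ i\le N+(j-k)m\}.
\]
A Leibniz-rule check shows that $D(V_{N,j})\subseteq V_{N,j}$. Since these subspaces exhaust $A_h$, the derivation $D$ is locally finite.
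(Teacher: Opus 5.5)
Your Step 2 and your converse for $\deg h=1$ are correct and essentially the paper's argument. Your $V_{N,j}$ is the paper's $\mathcal V_{M,N}$ written in $x$ rather than $z=h(x)$, and the check works because, when $\deg h=1$, moving $t$ past $G(x)$ never raises the $x$-degree.

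\textbf{The gap is Step 1, which is the real content of the theorem.} You never prove that $\deg_t D^k(x)$ is unbounded when $n=\deg_t w\ge 2$. You flag the nonvanishing of the iterated leading terms as the ``main obstacle'' and leave it open, and both routes you sketch have problems as stated.

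\emph{The weight grading.} With $\operatorname{wt}(x)=1$ and $\operatorname{wt}(t)=\deg h-1$, the relation $tx-xt=h$ has the same weight as $tx$. So the associated graded algebra is again noncommutative, with $tx-xt=h_{\mathrm{top}}$, and is not a Poisson polynomial algebra. To get a commutative graded algebra you would need, e.g., the plain $t$-degree filtration, where $\{t,x\}=h$.

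\emph{The fallback formula.} The derivatives are misplaced. The correct leading term is
\[
[w_nt^n,f t^k]=h\,\bigl(n\,w_nf'-k\,f\,w_n'\bigr)\,t^{n+k-1}+(\text{lower $t$-degree}).
\]

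\emph{The $t$-degree heuristic.} A $t$-degree comparison alone cannot exclude cancellation. The identity $n w_n p'-k p w_n'=0$ holds exactly when $p^n$ is a scalar multiple of $w_n^k$. That is a condition on $x$-degrees, not $t$-degrees.

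\textbf{The missing idea is the paper's $x$-degree bookkeeping.} Write $D^m(x)=p_m(x)t^{m(n-1)}+(\text{lower})$. The contributions of $\Delta_{s(x)}$, of the terms $w_it^i$ with $i<n$, and of the lower part of $D^m(x)$ reach $t$-degree at most $(m+1)(n-1)-1$. Hence
\[
p_1=n w_n h,\qquad p_{m+1}=h\,\bigl(n w_n p_m'-m(n-1)p_m w_n'\bigr).
\]
Set $d=\deg h$ and $e=\deg w_n$, and prove by induction that $\deg p_m=M:=m(d+e-1)+1$. The coefficient of $x^{M+e-1}$ in $n w_np_m'-m(n-1)p_mw_n'$ equals the product of the leading coefficients of $w_n$ and $p_m$ times
\[
nM-m(n-1)e=m\bigl(n(d-1)+e\bigr)+n>0 .
\]
Therefore $p_m\neq0$ for all $m$, so $\deg_tD^m(x)=m(n-1)$ is unbounded, contradicting local finiteness. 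This works uniformly for every $d\ge1$, so no separate ``rerun'' is needed for $\deg h=1$. With this inserted, the rest of your proof goes through.
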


\begin{proof}
By Nowicki's theorem \ref{squarefree}, in the square-free case, every derivation $D$ of $A_h$ can be written as $D=\ad_w+\Delta_{s(x)}$, where $w\in A_h$, $s(x)\in\Bbbk[x]$, $\deg s<\deg h$ and
\[
\Delta_{s(x)}(x)=0,
\quad
\Delta_{s(x)}(t)=s(x).
\]

We prove that, if $D$ is locally finite, then $\deg_t(w)\le 1$. If $w=0$, there is nothing to prove. Thus, assume $w\neq 0$ and write $w=\sum_{i=0}^n f_i(x)t^i$, with $f_n(x)\neq 0$. Assume, by contradiction, that $n\ge 2$. Since $\Delta_{s(x)}(x)=0$, we have
\[
D(x)=[w,x]=\sum_{i=1}^n f_i(x)[t^i,x].
\]

By Lemma \ref{lematix}, $[t^i,x]=i h(x)t^{i-1}+h(x)u_i$, with $\deg_t(u_i)<i-1$. Therefore, $D(x)=n f_n(x)h(x)t^{n-1}+v_1$, where $\deg_t(v_1)<n-1$. We denote $p_1(x):=n f_n(x)h(x)$. Since $h\notin\Bbbk$,  $p_1(x)$ is nonconstant.

We claim that, for every $m\ge 1$, there exist $p_m(x)\in\Bbbk[x]\setminus\Bbbk$ and $v_m\in A_h$ such that
\[
D^m(x)=p_m(x)t^{m(n-1)}+v_m
\]
with $\deg_t(v_m)<m(n-1)$. Moreover, if $d:=\deg h$ and $e:=\deg f_n$, then $\deg p_m=m(d+e-1)+1$. The case $m=1$ follows from the computation above, since $\deg p_1=\deg(f_nh)=e+d=1(d+e-1)+1$.

Assume that the claim holds for some $m\ge 1$. Thus,
\[
D^m(x)=p_m(x)t^{m(n-1)}+v_m,
\quad
\deg_t(v_m)<m(n-1).
\]

Observe that $\deg_t\bigl(\Delta_{s(x)}(a)\bigr)\le \deg_t(a)-1$, for every $a\in A_h$. As a consequence of Lemma \ref{lematix}, for
$f(x),q(x)\in\Bbbk[x]$ and $i,j\ge 0$, we have
\[
[f(x)t^i,q(x)t^j]
=
h(x)(if(x)q'(x)-jq(x)f'(x))t^{i+j-1}
+
\text{terms of lower $t$-degree}.
\]

In particular, commutators lower the expected $t$-degree by at least one. Therefore, the terms $f_i(x)t^i$, with $i<n$, contribute only terms of $t$-degree at most $m(n-1)+i-1\le m(n-1)+n-2$. Also, since $\deg_t(v_m)<m(n-1)$, the term $[w,v_m]$ contributes only terms of $t$-degree at most
$m(n-1)+n-2$. Finally, $\Delta_{s(x)}$ also contributes only terms of $t$-degree strictly smaller than $m(n-1)+n-1=(m+1)(n-1)$.

Thus, the term of highest possible $t$-degree in $D^{m+1}(x)$ comes only from $[f_n(x)t^n,p_m(x)t^{m(n-1)}]$. Using the equation above, we obtain
\[
[f_n(x)t^n,p_m(x)t^{m(n-1)}]
=
\]
\[
h(x)\bigl(n f_n(x)p_m'(x)-m(n-1)p_m(x)f_n'(x)\bigr)
t^{m(n-1)+n-1}
+
\text{terms of lower $t$-degree}.
\]

Since $m(n-1)+n-1=(m+1)(n-1)$, we obtain
\[
D^{m+1}(x)
=
p_{m+1}(x)t^{(m+1)(n-1)}+v_{m+1},
\]
where $p_{m+1}(x)=h(x)(n f_n(x)p_m'(x)-m(n-1)p_m(x)f_n'(x))$
and, additionally, $\deg_t(v_{m+1})<(m+1)(n-1)$. It remains to show that $p_{m+1}(x)$ is nonzero and has the desired degree. Let $M:=\deg p_m$. By the induction hypothesis, $M=m(d+e-1)+1$. The highest-degree term of
\[
n f_n(x)p_m'(x)-m(n-1)p_m(x)f_n'(x)
\]
has coefficient proportional to $nM-m(n-1)e$. Since $M=m(d+e-1)+1$, $nM-m(n-1)e=m(n(d-1)+e)+n$, and this is nonzero because $m\ge 1$, $n\ge 2$, $d\ge 1$, and $e\ge 0$. Therefore, $\deg(n f_n p_m'-m(n-1)p_m f_n')=M+e-1$.

Multiplying by $h(x)$, we obtain
\[
\deg p_{m+1}=d+M+e-1=(m+1)(d+e-1)+1,
\]
and then $p_{m+1}$ is nonzero and nonconstant, this proves the induction.

Consequently, $\deg_t(D^m(x))=m(n-1)$ for every $m\ge 1$. Since $n\ge 2$, these degrees are unbounded. Therefore, $\{D^m(x)\}_{m\ge 0}$ span an infinite-dimensional vector space. This contradicts the local finiteness of $D$.

Therefore, we write $w=u(x)t+v(x)$, with $u(x),v(x)\in\Bbbk[x]$. Let $b(x)\in\Bbbk[x]$. Since $\Delta_{s(x)}(b(x))=0$, we have
\[
D(b(x))
=
[u(x)t+v(x),b(x)]
=
u(x)[t,b(x)].
\]

By Equation \eqref{comm_t_fx_eq}, $[t,b(x)]=b'(x)h(x)$. Thus,
$D(b(x))=u(x)h(x)b'(x)$. Then, the restriction of $D$ to $\Bbbk[x]$ is
\[
D|_{\Bbbk[x]}=u(x)h(x)\partial_x.
\]

Since $D$ is locally finite, its restriction to the $D$-stable subalgebra $\Bbbk[x]$ is locally finite. By Lemma \ref{onevariable}, we obtain $\deg(uh)\le 1$.

Assume first that $\deg h\ge 2$. If $u\neq 0$, then $\deg(uh)=\deg u+\deg h\ge 2$, contradicting $\deg(uh)\le 1$. Thus, $u=0$. Consequently, $D=\ad_{v(x)}+\Delta_{s(x)}$.

Notice that
\[
\ad_{v(x)}(t)=v(x)t-tv(x)=-h(x)v'(x),
\]
and then $\ad_{v(x)}=\Delta_{-h(x)v'(x)}$. Thus,
\[
D=\Delta_{s(x)-h(x)v'(x)}=D_{s(x)-h(x)v'(x)}.
\]

Therefore, every locally finite derivation is of the form $D_{g(x)}$. Conversely, every derivation of the form $D_{g(x)}$ is locally nilpotent by Proposition \ref{prop_localnilp}, and then locally finite. So, if
$\deg h\ge 2$, the locally finite derivations are precisely the locally nilpotent derivations: this proves ${\rm(1)}$.

Assume that $\deg h=1$. Since $\deg(uh)\le 1$, we obtain $u\in\Bbbk$. Write $u=\lambda$ and then
\[
D=\lambda\ad_t+\ad_{v(x)}+\Delta_{s(x)}.
\]

As above, $\ad_{v(x)}=\Delta_{-h(x)v'(x)}$. Thus,
\[
D=\lambda\ad_t+\Delta_{s(x)-h(x)v'(x)}
=\lambda\ad_t+D_{g(x)},
\]
for some $g(x)\in\Bbbk[x]$. It remains to prove the converse. Let $D=\lambda\ad_t+D_{g(x)}$, with $\lambda\in\Bbbk$ and $g(x)\in\Bbbk[x]$. We prove that $D$ is locally finite. Write $h(x)=\alpha x+\beta$, with $\alpha\in\Bbbk^*$, $\beta \in \Bbbk$, and denote $z:=h(x)$. Then, $\Bbbk[x]=\Bbbk[z]$ and, by Equation \eqref{comm_t_fx_eq}, $tz=zt+\alpha z$. Equivalently, for every $F(z)\in\Bbbk[z]$,
\[
tF(z)=F(z)t+\alpha zF'(z).
\]

Write $g(x)=G(z)$, for some $G(z)\in\Bbbk[z]$. If $G=0$, denote $\ell=0$; otherwise, $\ell:=\deg G$. Notice that
\[
\ad_t(z)=\alpha z,
\quad
\ad_t(t)=0,
\]
and then $\ad_t(z^it^j)=\alpha iz^it^j$. We claim that, for every $j\ge 1$,
\[
D_{g(x)}(t^j)
\in
\operatorname{Span}_\Bbbk
\{z^r t^s\mid 0\le s\le j-1,\ 0\le r\le (j-s)\ell\}.
\]

For $j=1$, this follows from $D_{g(x)}(t)=g(x)=G(z)$, because $\deg G\le \ell$. Assume that the claim holds for some $j\ge 1$. Then,
\[
D_{g(x)}(t^{j+1})=D_{g(x)}(t^j)t+t^jG(z).
\]

By the induction hypothesis, every term appearing in $D_{g(x)}(t^j)$ is a linear combination of monomials $z^r t^s$ with
$0\le s\le j-1$ and $0\le r\le (j-s)\ell$. Multiplying such a monomial by $t$, we obtain $z^r t^{s+1}$. Since
$(j-s)\ell=((j+1)-(s+1))\ell$, the term $D_{g(x)}(t^j)t$ satisfies the desired bound for $j+1$. For the second term, we show by induction on $j$ that $t^jG(z)=\sum_{s=0}^j H_s(z)t^s$ with $\deg H_s\le\ell$ for every $s$: for $j=0$ this is $G(z)$ itself, of degree $\ell$; assuming it for $j$, we get
\[
t^{j+1}G(z)= 
\]
\[
t\Bigl(\sum_{s=0}^jH_s(z)t^s\Bigr)=\sum_{s=0}^j\bigl(H_s(z)t+\alpha zH_s'(z)\bigr)t^s=\sum_{s=0}^jH_s(z)t^{s+1}+\sum_{s=0}^j\alpha zH_s'(z)t^s,
\]
and since $\deg H_s\le\ell$ implies $\deg(zH_s')\le\ell$ as well, both families of coefficients appearing on the right have degree at most $\ell$, proving the claim for $j+1$.
Since $0\le s\le j$, we have $\ell\le ((j+1)-s)\ell$. Therefore, the term $t^jG(z)$ also satisfies the desired bound for $j+1$, this proves the claim.

Let $c=\sum_{i,j} c_{ij}z^it^j\in A_h$. Choose $N\ge 0$ such that $j\le N$ for every monomial $z^it^j$ appearing in $c$.
Also, choose $M\ge 0$ such that every monomial appearing in $c$ satisfies $i\le M+(N-j)\ell$. Denote
\[
\mathcal V_{M,N}:=
\operatorname{Span}_\Bbbk
\{z^it^j\mid 0\le j\le N,\ 0\le i\le M+(N-j)\ell\}.
\]

This is a finite-dimensional vector space containing $c$ such that $\ad_t(\mathcal V_{M,N})\subseteq \mathcal V_{M,N}$. Moreover, by the claim above, if $z^it^j\in \mathcal V_{M,N}$, then
\[
D_{g(x)}(z^it^j)=z^iD_{g(x)}(t^j)
\]
belongs to $\mathcal V_{M,N}$. Indeed, every term arising from $D_{g(x)}(t^j)$ has the form $z^r t^s$ with $0\le s\le j-1$, and $0\le r\le (j-s)\ell$. Then,
\[
i+r
\le
M+(N-j)\ell+(j-s)\ell
=
M+(N-s)\ell.
\]

Therefore, $D_{g(x)}(\mathcal V_{M,N})\subseteq \mathcal V_{M,N}$ and then $D(\mathcal V_{M,N})\subseteq \mathcal V_{M,N}$. Since $c\in \mathcal V_{M,N}$, the iterates $D^m(c)$ all belong to the finite-dimensional
space $\mathcal V_{M,N}$. So, $D$ is locally finite, this proves ${\rm(2)}$.
\end{proof}

\subsection{Locally finite sets of derivations}

We recall a result of Chitayat, Daigle, and Regeta
\cite{CDR} on locally finite sets of derivations, which will be used to establish a consequence of the preceding classification.

Let $B$ be a $\Bbbk$-algebra. A subset $\Delta\subseteq\Der(B)$ is said to be locally finite if, for every $b\in B$, there exists a finite-dimensional $\Bbbk$-subspace $U_b\subseteq B$ such that $b\in U_b$
and $D(U_b)\subseteq U_b$ for every $D\in\Delta$. The set $\Delta$ is said to be weakly locally finite if every finite subset of $\Delta$ is locally finite. Notice that weak local finiteness is stronger than requiring each element of $\Delta$ to be a locally finite derivation. Indeed, for a finite family of derivations, one requires the existence of a common finite-dimensional invariant subspace.

Following \cite{CDR}, a $\Bbbk$-algebra $B$ is called
derivation-finite if there exists a finite subset $S\subseteq B$ such that the only derivation vanishing on every element of $S$ is the zero derivation. Every finitely generated $\Bbbk$-algebra is derivation-finite. In particular, since $A_h$ is generated by $x$ and $t$, the algebra $A_h$ is derivation-finite.

Assume that $B$ is derivation-finite. By \cite[Corollary 1(b)]{CDR}, if $L$ is a solvable Lie subalgebra of $\Der(B)$ satisfying
\[
L\subseteq\LFD(B),
\]
then $L$ is weakly locally finite. Moreover, every finitely generated Lie subalgebra of a weakly locally finite Lie algebra of derivations is a locally finite set of derivations; see \cite[Remark 3.3(b)]{CDR}.

Variants of this problem were studied in \cite{Zaidenberg} in the case where $B$ is the algebra of regular functions on an affine variety. In particular, \cite{Zaidenberg} proves that, if $\Bbbk$ is an algebraically closed field of characteristic zero and $L$ is a solvable Lie subalgebra
of ${\rm Lie}(\Aut(\mathbb A^2))$ generated by locally finite derivations, then $L$ is weakly locally finite.

In general, the set $\LFD(B)$ need not be a Lie subalgebra of $\Der(B)$, since it may fail to be closed under addition or under the Lie bracket. Therefore, in order to apply
\cite[Corollary 1(b)]{CDR} with $L=\LFD(A_h)$, we must first verify that $\LFD(A_h)$ is a solvable Lie subalgebra of $\Der(A_h)$. Consequently, every finitely generated Lie subalgebra of $\LFD(A_h)$ is locally finite as a set of derivations; see \cite[Remark 3.3(b)]{CDR}.

\begin{corollary}\label{corAh}
Let $A_h$, where $h\in\Bbbk[x]\setminus\Bbbk$ is square-free. Then, $\LFD(A_h)$ is a solvable Lie subalgebra of $\Der(A_h)$. Moreover, $\LFD(A_h)$ is weakly locally finite, but it is not locally finite as a set of derivations.
\end{corollary}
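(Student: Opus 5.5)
Throughout I rely on Theorem \ref{clas-square}, which splits into two cases.

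\emph{Case $\deg h\ge 2$.} Here $\LFD(A_h)=\{D_{g(x)}\mid g\in\Bbbk[x]\}$. The map $g\mapsto D_{g(x)}$ is linear, so this set is a subspace. For the bracket, note that $D_{g}$ kills $\Bbbk[x]$ and $D_g(t)=g(x)\in\Bbbk[x]$. Hence
\[
[D_{g_1},D_{g_2}](t)=D_{g_1}(g_2)-D_{g_2}(g_1)=0 .
\]
The bracket also vanishes on $x$, so $\LFD(A_h)$ is an abelian Lie subalgebra, in particular solvable.

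\emph{Case $\deg h=1$.} Here $\LFD(A_h)=\Bbbk\ad_t\oplus\{D_g\}$. I check closure using the generators $x$ and $t$:
\[
[\ad_t,D_g](x)=0,\qquad [\ad_t,D_g](t)=\ad_t(g)-D_g([t,t])=[t,g]=h g' .
\]
So $[\ad_t,D_g]=D_{hg'}\in\LFD(A_h)$. The set $\{D_g\}$ is an abelian ideal, and the quotient is one-dimensional. Hence $\LFD(A_h)$ is a solvable Lie algebra, of derived length at most $2$.

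\emph{Weak local finiteness.} This follows from \cite[Corollary 1(b)]{CDR}, since $A_h$ is finitely generated and therefore derivation-finite.

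\emph{Failure of local finiteness as a set.} I would apply the definition to $b=t$. Suppose a finite-dimensional subspace $U\ni t$ were invariant under every $D_g$. Then $U$ would contain $D_{x^k}(t)=x^k$ for every $k\ge0$. These are infinitely many linearly independent elements, which is a contradiction. This argument works in both cases.

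The only point needing care is the bracket computation: a derivation is determined by its values on the generators $x$ and $t$, so checking $x$ and $t$ suffices. I do not expect a real obstacle.
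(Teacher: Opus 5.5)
Your proposal is correct and follows essentially the same route as the paper. You use Theorem \ref{clas-square} to get an abelian Lie algebra when $\deg h\ge 2$, and the identity $[\ad_t,D_g]=D_{hg'}$ to get derived length at most $2$ when $\deg h=1$. Weak local finiteness then comes from \cite[Corollary 1(b)]{CDR} via derivation-finiteness, and applying the derivations $D_{x^k}$ to $t$ rules out local finiteness of the whole set. Your explicit checks, that the set is a linear subspace and that brackets may be verified on the generators $x,t$, only make precise what the paper leaves implicit.
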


\begin{proof}
Since $A_h$ is generated as a $\Bbbk$-algebra by $x$ and $t$, it is derivation-finite. We first show that $\LFD(A_h)$ is a solvable Lie subalgebra of
$\Der(A_h)$.

Suppose that $\deg h\geq2$. By the classification of locally finite derivations of $A_h$, Theorem \ref{clas-square}, we have
\[
\LFD(A_h)
=
\{D_g\mid g\in\Bbbk[x]\}.
\]

Furthermore, $[D_f,D_g]=0$, for all $f,g\in\Bbbk[x]$. Therefore, $\LFD(A_h)$ is abelian and, in particular, solvable.

Suppose that $\deg h=1$. In this case,
\[
\LFD(A_h)
=
\{\lambda\ad_t+D_g
\mid
\lambda\in\Bbbk,\ g\in\Bbbk[x]\}.
\]

Since $[\ad_t,D_f]=D_{hf'}$, we obtain $[\lambda\ad_t+D_g,\mu\ad_t+D_f]=D_{h(\lambda f'-\mu g')}$, for all $\lambda,\mu\in\Bbbk$ and $f,g\in\Bbbk[x]$. Thus,
\[
[\LFD(A_h),\LFD(A_h)]
\subseteq
\{D_g\mid g\in\Bbbk[x]\}.
\]

Since $[D_f,D_g]=0$ for all $f,g\in\Bbbk[x]$, it follows that $\LFD(A_h)^{(2)}=0$. Thus, $\LFD(A_h)$ is solvable.

In both cases, $\LFD(A_h)$ is a solvable Lie subalgebra of
$\Der(A_h)$. Therefore, \cite[Corollary 1(b)]{CDR} implies that $\LFD(A_h)$ is weakly locally finite.

It remains to show that $\LFD(A_h)$ is not locally finite as a set. Suppose, by contradiction, that it is locally finite. Then, there exists a finite-dimensional subspace $U_t\subseteq A_h$ such that $t\in U_t$ and $D(U_t)\subseteq U_t$, for every $D\in\LFD(A_h)$. For every $g(x)\in\Bbbk[x]$, the derivation $D_g$ belongs to $\LFD(A_h)$ and satisfies $D_g(t)=g(x)$. Then, $g(x)\in U_t$ for every $g(x)\in\Bbbk[x]$. This implies $\Bbbk[x]\subseteq U_t$, contradicting the finite-dimensionality of $U_t$. Therefore, $\LFD(A_h)$ is not locally finite as a set of derivations.

\end{proof}

\subsection{The singular case}

In this subsection we assume that $h\in\Bbbk[x]\setminus\Bbbk$ is not square-free. We also denote,
\[
\psi:=\gcd(h,h')
\quad \text{and} \quad
g:=\frac{h}{\psi}.
\]

Let $N=\deg h$, and let $r$ be the number of pairwise distinct roots of $h$. Then, $\deg\psi=N-r$. We say that a polynomial $H \in A_h$ is \emph{special} if it is of the form
$$ H=h_nt^n+\ldots+h_1t^1,$$
where $n \ge 1$, $h_i \in \Bbbk[x]$ and $\deg (h_i) < N-r$, for all $i= 1,\ldots,n$. Given a special polynomial $H$, the linear map $E_H: A_h \to A_h$ defined as
$$ E_H(f)=\left[ \tfrac{1}{\psi}H,f \right], \quad f\in A_h, $$
is a well-defined derivation of $A_h$ (see \cite[Prop. 2.1 and 8.2]{Nowicki}). Moreover, $E_H$ is an inner derivation of $A_h$ if and only if $H=0$.

The bracket in $E_H$ is computed in the localization
\[
\mathcal B:=A_h[S^{-1}],
\quad
S=\{\psi^n\mid n\ge 0\}.
\]

By Nowicki's theorem \ref{Nowicki-nonsquarefree}, every derivation $D\in\Der(A_h)$ can be uniquely written as
\[
D=\ad_w+E_H+\Delta_{s(x)},
\]
where $w\in A_h$, $H$ is special, and $s(x)\in\Bbbk[x]$ satisfies $\deg s<\deg h$.

\begin{theorem}\label{clas-nonsquare} Let $h\in\Bbbk[x]$, $\deg (h)\ge 1$, be a non-square-free polynomial. Then,
\begin{enumerate}
\item If $\deg g\ge 2$, then a derivation $D\in\Der(A_h)$ is locally finite if
and only if
\[
D=D_{p(x)}
\]
for some $p(x)\in\Bbbk[x]$. In particular, a derivation of $A_h$ is locally
finite if and only if it is locally nilpotent.

\item If $\deg g=1$, then a derivation $D\in\Der(A_h)$ is locally finite if and
only if
\[
D=\lambda E_t+D_{p(x)}
\]
for some $\lambda\in\Bbbk$ and $p(x)\in\Bbbk[x]$.
\end{enumerate}
\end{theorem}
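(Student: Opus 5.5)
We reduce as in the square-free case (Theorem~\ref{clas-square}), working in the localization $\mathcal B=A_h[S^{-1}]$, which is again a differential Ore extension $\Bbbk[x]_\psi[t;h\partial_x]$. There $E_H=\ad_{H/\psi}$. By Theorem~\ref{Nowicki-nonsquarefree}, a derivation has the form $D=\ad_w+E_H+\Delta_{s(x)}$, so on $A_h\subseteq\mathcal B$ it equals $\ad_W+\Delta_{s(x)}$ with
\[
W:=w+\tfrac{1}{\psi}H=\sum_{i\ge 0}F_i t^i,\qquad F_i=\frac{w_i\psi+h_i}{\psi}\in\Bbbk[x]_\psi .
\]
Here $w=\sum w_i t^i$, $h_0=0$, and $\deg h_i<\deg\psi$ for $i\ge1$. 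Since $\deg h_i<\deg\psi$, we have $F_i=0$ iff $w_i=h_i=0$ for $i\ge 1$. The degree $\deg_t$ and the commutator formulas of Lemma~\ref{lematix} extend verbatim to $\mathcal B$, with coefficients in $\Bbbk[x]_\psi$. We measure these coefficients by the rational degree $\deg(a/b)=\deg a-\deg b$.

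\textbf{Step 1: $\deg_t W\le 1$.} Suppose $n:=\deg_t W\ge 2$. We repeat the induction of Theorem~\ref{clas-square}: $D^m(x)=p_m t^{m(n-1)}+(\text{lower})$, with
\[
p_{m+1}=h\bigl(nF_np_m'-m(n-1)p_mF_n'\bigr).
\]
Set $d=N=\deg h$ and $e=\deg F_n$; now $e$ may be negative, with $e\ge -\deg\psi=r-N$. The leading coefficient is again proportional to $nM-m(n-1)e=m(n(N-1)+e)+n$. We have $n(N-1)+e\ge 2N-2+r-N=N+r-2\ge 1$, since $N\ge2$ and $r\ge1$ in the non-square-free case. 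So $p_m\ne0$ for all $m$. Hence $\deg_t D^m(x)=m(n-1)$ is unbounded, and these iterates lie in $A_h$, contradicting local finiteness. The main point to check carefully is this step: leading terms over rational coefficients behave as in the polynomial case, and $\Delta_s$ and the lower terms of $W$ cannot reach the top $t$-degree.

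\textbf{Step 2: restriction to $\Bbbk[x]$.} Now $W=F_1t+F_0$, with $F_0=w_0\in\Bbbk[x]$ because $h_0=0$. For $b\in\Bbbk[x]$ we get
\[
D(b)=F_1hb'=(w_1\psi+h_1)\,g\,b'.
\]
So $D|_{\Bbbk[x]}=(w_1\psi+h_1)g\,\partial_x$, and Lemma~\ref{onevariable} gives $\deg((w_1\psi+h_1)g)\le1$. If $\deg g\ge2$, this forces $w_1\psi+h_1=0$, hence $w_1=h_1=0$ by the degree constraint on $h_1$. Then $D=\ad_{w_0}+\Delta_s=D_{s-hw_0'}$. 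If $\deg g=1$, then $w_1\psi+h_1$ is a constant $\lambda$. Because $\deg\psi\ge1>\deg h_1$, this gives $w_1=0$ and $h_1=\lambda$, so $H=\lambda t$ and $D=\lambda E_t+D_{s-hw_0'}$. The converse in case (1) follows from Proposition~\ref{prop_localnilp}.

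\textbf{Step 3: converse for $\deg g=1$.} We compute $E_t(x)=[t/\psi,x]=h/\psi=g$. Using $t\psi^{-1}=\psi^{-1}t-\psi'h\psi^{-2}$ we get
\[
E_t(t)=\frac{g\psi'}{\psi}\,t .
\]
Here $g\psi'/\psi$ is a polynomial, since every root of $\psi$ is a root of $g$ and $\psi'/\psi$ has only simple poles. It has degree $0$, so it is a constant $c$. With $z:=g$ (linear, so $\Bbbk[x]=\Bbbk[z]$) and $g=\alpha x+\beta$, we get $E_t(z)=\alpha z$ and $E_t(t)=ct$. Thus $E_t(z^it^j)=(i\alpha+jc)z^it^j$, i.e.\ $E_t$ is diagonal on this basis. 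The finite-dimensional spaces $\mathcal V_{M,N}$ from the proof of Theorem~\ref{clas-square}(2) are then stable under $E_t$. The claim bounding $D_{p(x)}(t^j)$ carries over, since $tF(z)=F(z)t+\alpha' zF'(z)$-type relations still hold: $h=\psi g$ is divisible by $z$, so $[t,F(z)]=hF'(z)$ has $z$-degree possibly larger than $\deg F$. This last point needs adjusting. We plan to enlarge the bound on the $z$-degree to $M+(N'-j)L$ with $L$ chosen large, e.g.\ $L\ge\max(\deg p,\deg h-1)$, so that the space stays $D_p$-stable. We then conclude that $\lambda E_t+D_{p(x)}$ is locally finite.
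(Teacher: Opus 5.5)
Your proposal is correct and follows the paper's proof essentially step by step. Both use the Nowicki decomposition with $W=w+\frac{1}{\psi}H$, then the leading $t$-degree induction in the localization; your positivity bound $m(n(N-1)+e)+n>0$ is the paper's $m(n(d-1)+e+(n-1)\rho)+n$ after reparametrizing $e$. Both then restrict to $(w_1\psi+h_1)g\,\partial_x$ on $\Bbbk[x]$, and for the converse use the diagonal action of $E_t$ on the monomials $z^it^j$.

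The $D_{p(x)}$-stability step that you left as a plan is closed exactly as in the paper. With $L\ge\max(\deg p,N-1)$, the spans of $z^it^j$ with $i+Lj\le K$ form a multiplicative filtration, because $tz=zt+cz^N$ and $N\le L+1$. The Leibniz rule then gives $D_{p(x)}$-stability. Your proposed spaces are just these spans intersected with $j\le N'$, so they are stable too.
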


\begin{proof}
By Theorem \ref{Nowicki-nonsquarefree}, write $D=\ad_w+E_H+\Delta_{s(x)}$, where $w\in A_h$, $H$ is special, and $\deg s<\deg h$. Recall that
\[
E_H(f)=\left[\frac{1}{\psi}H,f\right],
\quad f\in A_h,
\]
where the bracket is taken in the localization
$\mathcal B=A_h[S^{-1}]$, where $S=\{\psi^m\mid m\geq0\}$.
Since $h\partial_x$ extends to $\Bbbk[x,\psi^{-1}]$, notice that we also may consider $\mathcal B=\Bbbk[x,\psi^{-1}][t;h(x)\partial_x]$, and the $t$-degree extends naturally to $\mathcal B$. We also denote $W:=w+\frac{1}{\psi}H$.
Then, for every $f\in A_h$,
\[
D(f)=[W,f]+\Delta_{s(x)}(f).
\]

We claim that local finiteness of $D$ implies
$\deg_t(W)\leq1$. Write
\[
w=\sum_i u_i(x)t^i,
\quad
H=\sum_i h_i(x)t^i.
\]

Thus, the coefficient of $t^i$ in $W$ is
\[
\frac{\psi(x)u_i(x)+h_i(x)}{\psi(x)}.
\]
Since $H$ is special, every nonzero $h_i$ satisfies
$\deg h_i<\deg\psi$. Then, $\psi u_i+h_i=0$ if and only if $u_i=h_i=0$. Therefore,
\[
\deg_t(W)=\max\{\deg_t(w),\deg_t(H)\}.
\]

Suppose that $n:=\deg_t(W)\geq2$, and write the leading coefficient of $W$ as $c(x)/\psi(x)$, with $c(x)\neq0$. The leading $t$-degree argument used in the proof of Theorem \ref{clas-square} extends to
the present localization. Indeed, for $a(x),b(x)\in\Bbbk(x)$,
\[
[a(x)t^i,b(x)t^j]
=
h(x)\bigl(i\,a(x)b'(x)-j\,b(x)a'(x)\bigr)t^{i+j-1}
+
\text{terms of lower $t$-degree}.
\]
Consequently, an induction gives
\[
D^m(x)
=
p_m(x)t^{m(n-1)}
+
\text{terms of lower $t$-degree},
\]
where $p_m(x)\neq0$ for every $m\geq1$. More precisely, if
\[
d:=\deg g,\quad e:=\deg c,\quad \rho:=\deg\psi,
\]
then $\deg p_m=m(d+e-1)+1$. At the induction step, the coefficient of the highest possible
$x$-degree is proportional to $m\bigl(n(d-1)+e+(n-1)\rho\bigr)+n$. Since $d\geq1$, $\rho\geq1$, $e\geq0$, and $n\geq2$, this is a
positive integer, and hence it is nonzero because $\operatorname{char}(\Bbbk)=0$. Thus, $\deg_t(D^m(x))=m(n-1)$, for every $m\geq1$, contradicting the local finiteness of $D$. Therefore, $\deg_t(W)\leq1$ and, then, $w=u(x)t+v(x)$ and $H=r(x)t$, where $u(x),v(x),r(x)\in\Bbbk[x]$ and $\deg r<\deg\psi$. For every $b(x)\in\Bbbk[x]$, using \eqref{comm_t_fx_eq} and $h=\psi g$, we obtain
\[
\begin{aligned}
D(b(x))
&=
[u(x)t+v(x),b(x)]
+
\left[\frac{r(x)}{\psi(x)}t,b(x)\right]\\
&=
g(x)\bigl(\psi(x)u(x)+r(x)\bigr)b'(x).
\end{aligned}
\]

Then, $\Bbbk[x]$ is $D$-stable and $D|_{\Bbbk[x]}
=g(x)\bigl(\psi(x)u(x)+r(x)\bigr)\partial_x$. Since $D$ is locally finite, its restriction to $\Bbbk[x]$ is
locally finite. By Lemma \ref{onevariable}, the polynomial
$g(\psi u+r)$ is either zero or has degree at most one.

Suppose first that $\deg g\geq2$. Then, necessarily
$\psi u+r=0$. Since $\deg r<\deg\psi$, this implies
$u=0$ and $r=0$. Thus,
\[
D=\ad_{v(x)}+\Delta_{s(x)}.
\]
Since $\ad_{v(x)}(x)=0$ and $\ad_{v(x)}(t)=-h(x)v'(x)$, we obtain $D=D_{s(x)-h(x)v'(x)}$. Conversely, every derivation $D_{p(x)}$ is locally nilpotent by
Proposition \ref{prop_localnilp}, and then locally finite. By the same proposition, these are precisely the locally nilpotent derivations of $A_h$. This proves $\rm(1)$.

Assume now that $\deg g=1$. Then, $\psi u+r=\lambda$,
for some $\lambda\in\Bbbk$. Since $\deg r<\deg\psi$, we again obtain $u=0$ and $r=\lambda$. Therefore,
\[
D=\lambda E_t+\ad_{v(x)}+\Delta_{s(x)}
=\lambda E_t+D_{p(x)},
\]
for some $p(x)\in\Bbbk[x]$.

It remains to prove the converse. Since $\deg g=1$, the polynomial $h$ has a unique distinct root. Thus, putting $z=x-a$ for a suitable $a\in\Bbbk$, we may write
$h=\alpha z^N$, where $N=\deg h\geq2$, for some $\alpha\in\Bbbk^*$. Then, for suitable $\beta,\gamma\in\Bbbk^*$, $\psi=\beta z^{N-1}$ and $g=\gamma z$. In particular, $t$ is special, and
\[
E_t(z)=\gamma z,
\quad
E_t(t)=\gamma(N-1)t.
\]

Let $D=\lambda E_t+D_{p(x)}$ and write $p(x)=P(z)$. Choose $L\geq N-1$ such that, if $P\neq0$, $L\geq\deg P$.
For $M\geq0$, we denote
\[
\mathcal V_M=\operatorname{Span}_{\Bbbk}
\{z^it^j\mid i+Lj\leq M\}.
\]
Each $\mathcal V_M$ is finite-dimensional. Moreover,
$tz=zt+\alpha z^N$, and $N\leq L+1$, so the spaces $\mathcal V_M$ form a multiplicative filtration of $A_h$.

Since $E_t(z)=\gamma z$ and $E_t(t)=\gamma(N-1)t$, we have
$E_t(\mathcal V_M)\subseteq\mathcal V_M$. Also,
\[
D_{p(x)}(z)=0,
\quad
D_{p(x)}(t)=P(z),
\]
and $\deg P\leq L$ whenever $P\neq0$. Then, by the Leibniz rule, $D_{p(x)}(\mathcal V_M)\subseteq\mathcal V_M$. Therefore, $D(\mathcal V_M)\subseteq\mathcal V_M$. Since every element of $A_h$ belongs to some finite-dimensional $\mathcal V_M$, the derivation $D$ is locally finite. This proves $\rm(2)$.
\end{proof}

Thus, as in the square-free case, the classification above leads to the same phenomenon for locally finite sets of derivations.

\begin{corollary}
Let $h\in\Bbbk[x]\setminus\Bbbk$ be non-square-free, and let $\psi=\gcd(h,h')$ and $g=\frac{h}{\psi}$. Then, $\LFD(A_h)$ is a solvable Lie subalgebra of $\Der(A_h)$ and is weakly locally finite. In particular, every finite subset of $\LFD(A_h)$ is locally finite as a set of derivations. However, $\LFD(A_h)$ is not locally finite as a set of derivations.
\end{corollary}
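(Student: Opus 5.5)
The plan mirrors the proof of Corollary~\ref{corAh}, with Theorem~\ref{clas-nonsquare} playing the role of Theorem~\ref{clas-square}. First I will show that $\LFD(A_h)$ is a solvable Lie subalgebra of $\Der(A_h)$, splitting into the two cases $\deg g\ge 2$ and $\deg g=1$ of Theorem~\ref{clas-nonsquare}. Then I will apply \cite[Corollary 1(b)]{CDR} to get weak local finiteness, using that $A_h$ is derivation-finite because it is generated by $x$ and $t$. Finally I will rule out local finiteness of the whole set with the same $U_t$ argument as before.

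\emph{Case $\deg g\ge 2$.} Here $\LFD(A_h)=\{D_p\mid p\in\Bbbk[x]\}$. This set is a subspace, since $D_p+cD_q=D_{p+cq}$. It is abelian: $[D_p,D_q]$ kills $x$, and
\[
[D_p,D_q](t)=D_p(q(x))-D_q(p(x))=0,
\]
so $[D_p,D_q]=0$ on the generators and hence everywhere. In particular it is solvable.

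\emph{Case $\deg g=1$.} Here $\LFD(A_h)=\{\lambda E_t+D_p\}$, which is again a subspace. The key computation is $[E_t,D_p]$. Work in the coordinate $z$ from the proof of Theorem~\ref{clas-nonsquare}, where $E_t(z)=\gamma z$ and $E_t(t)=\gamma(N-1)t$, and write $p(x)=P(z)$.
\begin{itemize}
\item On $z$: both $E_t(D_p(z))=E_t(0)=0$ and $D_p(E_t(z))=\gamma D_p(z)=0$, so $[E_t,D_p](z)=0$.
\item On $t$:
\[
[E_t,D_p](t)=E_t(P(z))-D_p\bigl(\gamma(N-1)t\bigr)=\gamma\bigl(zP'(z)-(N-1)P(z)\bigr).
\]
\end{itemize}
Hence $[E_t,D_p]=D_{\tilde p}$ with $\tilde p=\gamma(zP'-(N-1)P)$. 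More generally,
\[
[\lambda E_t+D_p,\mu E_t+D_q]\in\{D_r\mid r\in\Bbbk[x]\}.
\]
So $\LFD(A_h)$ is closed under the bracket, its derived algebra lies in the abelian ideal $\{D_r\}$, and $\LFD(A_h)^{(2)}=0$. The only care needed is checking that brackets of derivations agree on generators, which uses only the formulas for $E_t$ on $z$ and $t$ established in the previous proof.

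Having a solvable Lie subalgebra contained in $\LFD(A_h)$, \cite[Corollary 1(b)]{CDR} gives weak local finiteness. \cite[Remark 3.3(b)]{CDR}, or the definition directly, then gives that every finite subset is locally finite.

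For the negative statement, suppose a finite-dimensional $U_t\ni t$ is invariant under all of $\LFD(A_h)$. In both cases every $D_p$ lies in $\LFD(A_h)$, so $p(x)=D_p(t)\in U_t$ for all $p$. Then $\Bbbk[x]\subseteq U_t$, a contradiction. I do not expect a genuine obstacle. The only step requiring computation is the bracket $[E_t,D_p]$, and even its exact form is irrelevant beyond the facts that it kills $x$ (equivalently $z$) and hence is some $D_r$.
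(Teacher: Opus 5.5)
Your proposal is correct and follows the same route as the paper. You use Theorem~\ref{clas-nonsquare} in the two cases $\deg g\ge 2$ and $\deg g=1$, then get solvability from $[E_t,D_p]\in\{D_r\mid r\in\Bbbk[x]\}$ together with $[D_p,D_q]=0$, and finish with \cite[Corollary 1(b)]{CDR} and the $U_t$ argument from Corollary~\ref{corAh}. Your explicit computation $[E_t,D_p]=D_{\tilde p}$ with $\tilde p=\gamma\bigl(zP'(z)-(N-1)P(z)\bigr)$ is also correct, and it supplies the bracket that the paper only asserts.
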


\begin{proof}
The proof is analogous to that of Corollary \ref{corAh}. If $\deg g\geq2$, then Theorem \ref{clas-nonsquare} gives
\[
\LFD(A_h)=\{D_p\mid p\in\Bbbk[x]\},
\]
which is abelian. If $\deg g=1$, then
\[
\LFD(A_h)
=
\{\lambda E_t+D_p
\mid
\lambda\in\Bbbk,\ p\in\Bbbk[x]\}.
\]
Moreover, as in the proof of Theorem \ref{clas-nonsquare},
$[E_t,D_p]=D_q$, for some $q\in\Bbbk[x]$. Since $[D_p,D_q]=0$ for all $p,q\in\Bbbk[x]$, it follows that $\LFD(A_h)$ is solvable.

The weak local finiteness, as well as the fact that $\LFD(A_h)$ is not locally finite as a set of derivations, follows exactly as in Corollary \ref{corAh}.
\end{proof}

\bibliographystyle{abbrv}

\bibliography{referencias}

@article {V92,
    AUTHOR = {van den Essen, Arno},
     TITLE = {Locally finite and locally nilpotent derivations with
              applications to polynomial flows and polynomial morphisms},
   JOURNAL = {Proc. Amer. Math. Soc.},
  FJOURNAL = {Proceedings of the American Mathematical Society},
    VOLUME = {116},
      YEAR = {1992},
    NUMBER = {3},
     PAGES = {861--871},
      ISSN = {0002-9939,1088-6826},
   MRCLASS = {13B10 (14E09 34A99 34C99)},
  MRNUMBER = {1111440},
MRREVIEWER = {Gary\ H.\ Meisters},
       DOI = {10.2307/2159458},
       URL = {https://doi.org/10.2307/2159458},
}

@Article{SuarezVivas,
  author    = {Suárez-Alvarez, Mariano and Vivas, Quimey},
  journal   = {Journal of Algebra},
  title     = {Automorphisms and isomorphisms of quantum generalized Weyl algebras},
  year      = {2015},
  issn      = {0021-8693},
pages     = {540-552},
  volume    = {424},
  doi       = {10.1016/j.jalgebra.2014.08.045},
  publisher = {Elsevier},
}

@Article{bre,
  author    = {Almulhem, Munerah and Brzeziński, Tomasz},
  journal   = {Journal of Algebra},
  title     = {Skew derivations on generalized Weyl algebras},
  year      = {2018},
  volume    = {493},
  pages     = {194--235},
  doi       = {10.1016/j.jalgebra.2017.09.018},
  publisher = {Elsevier},
}

@article{BLO2015,
  title={A parametric family of subalgebras of the {W}eyl algebra {I}. {S}tructure and automorphisms},
  author={Georgia Benkart and Samuel A. Lopes and Matthew Ondrus},
  journal={Transactions of the American Mathematical Society},
  year={2015},
  volume={367},
number={3},
  pages={1993--2021},
  url={https://api.semanticscholar.org/CorpusID:119174319}
}

@article{Re1968,
author={R. Rentschler},
title={Op\'{e}rations du groupe additif sur le plan affine},
journal={C. R. Acad. Sci. Paris Ser. A--B},
year={1968},
volume={267},
pages={384--387},
URL={https://cir.nii.ac.jp/crid/1573105974149033600},
}

@article{ISF2021,
author = {Ivan Kaygorodov and Samuel A. Lopes and Farukh Mashurov},
journal = {Communications in Mathematics},
language = {eng},
number = {2},
pages = {269--279},
publisher = {University of Ostrava},
title = {Actions of the additive group ${G}_a$ on certain noncommutative deformations of the plane},
url = {http://eudml.org/doc/297521},
volume = {29},
year = {2021},
}

@Article{Alev,
  author    = {Alev, J. and Chamarie, M.},
  journal   = {Communications in Algebra},
  title     = {Derivations et automorphismes de quelques algebras quantiques},
  year      = {1992},
  issn      = {1532-4125},
  month     = jan,
  number    = {6},
  pages     = {1787--1802},
  volume    = {20},
  doi       = {10.1080/00927879208824431},
  publisher = {Informa UK Limited},
}

@Article{CDR,
  author  = {Chitayat, Michael and Daigle, Daniel and Regeta, Andriy},
  title   = {Locally finite sets of derivations},
  journal = {Proc. Amer. Math. Soc.},
  year    = {2026},
  note    = {In press},
  doi     = {10.1090/proc/17868},
  eprint  = {2606.08677},
  archiveprefix = {arXiv},
  primaryclass  = {math.AC},
}

@Article{Zaidenberg,
  author        = {Zaidenberg, Mikhail},
  journal       = {arXiv preprint},
  title         = {Locally finite solvable Lie algebras of derivations},
  year          = {2026},
  month         = apr,
  archiveprefix = {arXiv},
  doi           = {10.48550/arXiv.2604.02864},
  eprint        = {2604.02864},
  primaryclass  = {math.AG},
  url           = {https://arxiv.org/abs/2604.02864},
}

@article{Nowicki,
author = {Andrzej Nowicki},
title = {Derivations of {O}re Extensions of the Polynomial Ring in One Variable},
journal = {Communications in Algebra},
volume = {32},
number = {9},
pages = {3651--3672},
year = {2004},
publisher = {Taylor \& Francis},
doi = {10.1081/AGB-120039638},
URL = {https://doi.org/10.1081/AGB-120039638},
eprint = {https://doi.org/10.1081/AGB-120039638}
}

@article{SBVA2025,
author = {A. Santana and R. Baltazar and R. Vinciguerra and W. Araujo},
title = {On isotropy groups of quantum plane},
journal = {Journal of Pure and Applied Algebra},
volume = {229},
number = {11},
pages = {},
year = {2025},
publisher = {},
doi = {},
URL = {https://doi.org/10.1016/j.jpaa.2025.108095},
eprint = {https://doi.org/10.1016/j.jpaa.2025.108095}
}

@article{Dixmier,

author = {J. Dixmier},

title = {Sur les algèbres de Weyl},

journal = {Bull. Soc. Math. France},

volume ={96},

year = {1968}, 

pages = {209–242}
}

\end{document}